\theoremstyle{plain}
\newtheorem{theorem}{Theorem}[section]
\newtheorem*{theorem*}{Theorem}
\newtheorem{lemma}[theorem]{Lemma}
\newtheorem{corollary}[theorem]{Corollary}
\theoremstyle{definition}
\newtheorem*{rexample}{Example}
\newcommand\definedTerm\textit
\DeclarePairedDelimiter\abs{\lvert}{\rvert}
\newcommand\Alg{\mathrm{Alg}}
\newcommand\algA{\mathcal A}
\newcommand\algB{\mathcal B}
\DeclareMathOperator\Aut{Aut}
\newcommand\bvecB B
\DeclareMathOperator\Cl{Cl}
\DeclareMathOperator\Der{Der}
\newcommand\derD{D}
\DeclareMathOperator\E{E}
\renewcommand\emptyset\varnothing
\DeclarePairedDelimiter\eqc{[}{]}
\renewcommand\epsilon\varepsilon
\DeclareMathOperator\Ex{\Lambda}
\newcommand\ezero{{e_0}}
\newcommand\F{\mathbb F}
\DeclarePairedDelimiter\gen{\langle}{\rangle}
\renewcommand\geq\geqslant
\newcommand\gradeInv{\alpha}
\DeclareMathOperator\Grp{\mathsf{Grp}}
\DeclareMathOperator\id{id}
\DeclareMathOperator\im{im}
\newcommand\isomI i
\newcommand\isomF f
\newcommand\isomG g
\newcommand\isomP p
\newcommand\isomS s
\renewcommand\leq\leqslant
\DeclareMathOperator\LieAlg{\mathsf{LieAlg}}
\newcommand\mvecA A
\newcommand\mvecB B
\newcommand\mvecC C
\newcommand\mvecD D
\newcommand\mvecX X
\newcommand\mvecY Y
\newcommand\mvecZ Z
\let\O\undefined
\DeclareMathOperator\O{O}
\DeclareMathOperator\operp{\mathbin{\raisebox{1pt}{\(\scriptstyle\perp\mkern-16mu\bigcirc\)}}}
\DeclarePairedDelimiter\paren{(}{)}
\renewcommand\phi\varphi
\newcommand\pss I
\newcommand\ptP P
\newcommand\ptQ Q
\newcommand\ptR R
\newcommand\qfQ q
\DeclareMathOperator\Quad{\mathsf{QVec}}
\newcommand\R{\mathbb R}
\DeclareMathOperator\rad{rad}
\newcommand\sbfB b
\DeclareMathOperator\se{\mathfrak{se}}
\DeclarePairedDelimiter\set{\{}{\}}
\DeclareMathOperator\so{\mathfrak{so}}
\DeclareMathOperator\spann{span}
\renewcommand\trianglelefteq\trianglelefteqslant
\newcommand\vecL\ell
\newcommand\vecA a
\newcommand\vecB b
\newcommand\vecM m
\newcommand\vecU u
\newcommand\vecV v
\newcommand\vecW w
\newcommand\vecX x
\newcommand\vecY y
\newcommand\vsM M
\newcommand\vsU U
\newcommand\vsV V
\newcommand\vsW W
\apptocmd{\thebibliography}{\raggedright}{}{}
\begin{document}
\title[Exploiting degeneracy in PGA]{Exploiting degeneracy in projective\\ geometric algebra}

\author[J. Bamberg]{John Bamberg}
\address{Centre for the Mathematics of Symmetry and Computation, Department of Mathematics and Statistics, The University of Western Australia, 35 Stirling Highway, Crawley, W. A. 6019, Australia.}
\email{John.Bamberg@uwa.edu.au}

\author[J. Saunders]{Jeff Saunders}
\address{Department of Mathematics and Statistics, The University of Western Australia, 35 Stirling Highway, Crawley, W. A. 6019, Australia.}
\email{Jeff.Saunders@uwa.edu.au}

\subjclass{15A66}
\keywords{Degenerate, Clifford algebra, PGA, Affine geometry, Decomposition}

\begin{abstract}
The last two decades, since the seminal work of \citet{selig1}, has seen projective geometric algebra (PGA) gain popularity as a modern coordinate-free framework for doing classical Euclidean geometry and other Cayley-Klein geometries. This framework is based upon a degenerate Clifford algebra, and it is the purpose of this paper to delve deeper into its internal algebraic structure and extract meaningful information for the purposes of PGA. This includes exploiting the split extension structure to realise the natural decomposition of elements of this Clifford algebra into Euclidean and ideal parts. This leads to a beautiful demonstration of how Playfair's axiom for affine geometry arises from the ambient degenerate quadratic space. The highlighted split extension property of the Clifford algebra also corresponds to a splitting of the group of units and the Lie algebra of bivectors. Central to these results is that the degenerate Clifford algebra \(\Cl(\vsV)\) is isomorphic to the twisted trivial extension \(\Cl(\vsV/\F\ezero)\ltimes_\gradeInv\Cl(\vsV/\F\ezero)\), where \(\ezero\) is a degenerate vector and $\gradeInv$ is the grade-involution.
\end{abstract}

\maketitle

\section{Introduction}
Euclidean projective (or plane-based) geometric algebra (PGA) is a new and promising approach to Euclidean geometry, introduced by \citet{selig1,selig2} and developed further by \citet{gunn1,gunn2}. Euclidean PGA uses a degenerate \((n+1)\)-dimensional quadratic space, whose elements represent the hyperplanes of \(n\)-dimensional Euclidean space; its weak orthogonal group is then isomorphic to the group of motions of Euclidean space (see \citet{havlicek1,havlicek2}). PGA also has elliptic and hyperbolic variants, which use \emph{nondegenerate} \((n+1)\)-dimensional quadratic spaces, and whose Clifford algebras enjoy a natural polarity as a result of that nondegeneracy. See \citet{gunn3} for in-depth coverage of both the uses of this polarity and some alternative dualities available in Euclidean PGA. A theme across discussions of duality has been the idea that Euclidean PGA's lack of polarity is a deficiency to be either worked around with such alternatives, or avoided by working in the Clifford algebra of a higher-dimensional nondegenerate quadratic space. In this article we point to some additional internal structure, found only in degenerate Clifford algebras, that may lead to new perspectives.

\Citet{dorst_de_keninck2} make heavy use of the notion of splitting elements of the Clifford algebra into \definedTerm{Euclidean} and \definedTerm{ideal} parts\added{; \citet{lengyel1} similarly decomposes elements into \definedTerm{bulk} and \definedTerm{weight} parts}. We will see in \Cref{section:quadratic_space} that this splitting comes from the fact that the degenerate quadratic space of planes in Euclidean space is a split extension of the space of parallel classes of planes, and that the latter is isometric to the cotangent space at any point. This can be used to demonstrate Playfair's axiom (better known as Euclid's fifth postulate), giving our first hint at the deep links between degenerate quadratic spaces and metric affine geometry. We also get, via functoriality, resulting split extensions of the corresponding Clifford algebra (\Cref{section:clifford_algebra}), as well as its group of units and the Lie algebra of its bivectors (\Cref{section:group_of_units}). In particular we have the beautiful coincidence that all of the ideal parts of elements of the Clifford algebra live in an ideal of the algebra.

The mathematical treatment in this paper is entirely coordinate-free and works for any field not of characteristic 2, but in the interests of applicability and accessibility, the running example uses the real numbers and a basis. Some readers may be surprised to see the semidirect product symbol \(\ltimes\), rather than its mirror image \(\rtimes\) which may be more commonly seen in group theory texts; for compatibility and consistency with the Clifford algebra convention of acting on the left, the authors have chosen the former notation.

\section{Decomposing a degenerate quadratic space}\label{section:quadratic_space}
For the purposes of this article, we work over a fixed field \(\F\) not of characteristic 2, and assume algebras are associative unless otherwise specified. A \replaced{\definedTerm{symmetric bilinear form}}{symmetric bilinear form} \(\sbfB\) on a vector space \(\vsV\) is a bilinear map \(\sbfB:\vsV\times\vsV\to\F\) such that for all \(\vecU,\vecV\in\vsV\), we have that \(\sbfB(\vecU,\vecV)=\sbfB(\vecV,\vecU)\). Note that we place no further restrictions, such as definiteness, on \(\sbfB\). A \definedTerm{quadratic space} (also known as a \definedTerm{metric vector space} or \definedTerm{symmetric bilinear space}) \(\vsV\coloneqq(\vsV,\sbfB)\) is a vector space \(\vsV\) together with a symmetric bilinear form \(\sbfB\) on \(\vsV\)\footnote{Many authors define a quadratic space in terms of a quadratic form rather than a symmetric bilinear form; the notions are equivalent in characteristic not equal to 2, and symmetric bilinear forms will be more convenient for us to work with.}.

\begin{rexample}
    We will use as a running example the real projective space of planes in three-dimensional Euclidean space; all later examples refer back to this. We begin with the four-dimensional vector space \(\vsV\coloneqq\spann\set{e_0,e_1,e_2,e_3}\) over \(\R\) and say that the element \(\vecV_0e_0+\vecV_1e_1+\vecV_2e_2+\vecV_3e_3\in\vsV\) represents the plane with equation \(\vecV_0+\vecV_1x+\vecV_2y+\vecV_3z=0\). We immediately note a few things: first, that \(\vecV\) and \(\lambda\vecV\) represent the same plane for any nonzero scalar \(\lambda\in\R\); this is a homogeneous representation (thus zero will not represent a plane). Second, elements \(\vecU,\vecV\in\vsV\) represent parallel planes if and only if \(\vecU=\lambda\vecV+\mu e_0\) for some \(\lambda,\mu\in\R\) with \(\lambda\neq0\). Multiples of \(e_0\) itself do not represent planes, but the elements of the sequence \(m\mapsto(1/m)\vecV+e_0\) represent a sequence of parallel planes regressing monotonically from a point.

    The two-parameter bundle of planes through a given point \(\ptP\) forms a three-dimensional (projective dimension two) subspace \(\vsV_\ptP\), the one-parameter bundle of planes through a line forms a two-dimensional subspace, and each plane alone forms a one-dimensional subspace. We will also be using the three-dimensional quotient space \(\vsV/\R e_0\), whose elements represent parallel classes of planes; the canonical projection \(\pi\) sends a representative of a plane to a representative of that plane's parallel class. This is also a homogeneous representation, so the cosets \(\eqc\vecV\coloneqq\vecV+\R e_0\) and \(\lambda\eqc\vecV=\eqc{\lambda\vecV}\) represent the same parallel class of planes.
    
    We define the map \[\begin{split}
        \sbfB:\vsV\times\vsV&\to\R\\
        (\vecU,\vecV)&\mapsto\vecU_1\vecV_1+\vecU_2\vecV_2+\vecU_3\vecV_3,
    \end{split}\] where \(\vecU=\vecU_0e_0+\vecU_1e_1+\vecU_2e_2+\vecU_3e_3\) and \(\vecV=\vecV_0e_0+\vecV_1e_1+\vecV_2e_2+\vecV_3e_3\). It is straightforward to confirm that \(\sbfB\) is bilinear and symmetric, giving us the quadratic space \((\vsV,\sbfB)\). This lets us find the \definedTerm{magnitude} of an element \(\vecV\) of \(\vsV\) with \(\abs\vecV^2\coloneqq\sbfB(\vecV,\vecV)\geq0\), giving two canonical (\definedTerm{unit-magnitude}) representatives \(\vecV/\abs\vecV\) and \(-\vecV/\abs\vecV\) for each plane. We will treat these two representatives as having opposite orientation (facing in opposite directions), and say that \(\vecV\) and \(\lambda\vecV\) have the same orientation for any \emph{positive} scalar \(\lambda\in\R\). Note that \begin{equation}\label{equation:cosine_law}
        \sbfB(\vecU,\vecV)=\abs\vecU\abs\vecV\cos\theta,
    \end{equation} where \(\theta\) is the dihedral angle between the planes represented by \(\vecU\) and \(\vecV\) (taking orientation into account).
\end{rexample}

An element \(\vecV\in\vsV\) is \definedTerm{null} (or \definedTerm{isotropic}) if \(\sbfB(\vecV,\vecV)=0\), and \definedTerm{nonnull} (or \definedTerm{anis\added{o}tropic}) otherwise. More strongly, if \(\sbfB(\vecV,\vecV')=0\) for all \(\vecV'\in\vsV\), then \(\vecV\) is called \definedTerm{degenerate} (or \definedTerm{singular}); otherwise it is \definedTerm{nondegenerate} (or \definedTerm{regular}). The subspace of all degenerate elements in \(\vsV\) is called its \definedTerm{radical} and denoted \(\rad\vsV\). A quadratic space with nontrivial radical is called \definedTerm{degenerate}. We will call a subspace \(\vsU\leq\rad\vsV\) a \definedTerm{radical subspace} of \(\vsV\); we say that \(\vsU\) is \definedTerm{radical} in \(\vsV\) and write \(\vsU\trianglelefteq\vsV\).

\begin{rexample}
    For all \(\vecV\in\vsV\) we have that \(\sbfB(e_0,\vecV)=0\), so \(\R e_0\trianglelefteq\vsV\) and \(\vsV\) is degenerate. In fact \(\rad\vsV=\R e_0\) as no degenerate elements lie outside \(\R e_0\). There are also no null nondegenerate elements in \(\vsV\). Let \(\vecV\in\vsV\); then \(\vecV+e_0\) is the representative of a parallel plane with the same orientation and magnitude. As \Cref{equation:cosine_law} suggests, for \(\vecW\in\vsV\) representing any plane, \[
        \sbfB(\vecV+e_0,\vecW)=\sbfB(\vecV,\vecW)+\sbfB(e_0,\vecW)=\sbfB(\vecV,\vecW)+0=\sbfB(\vecV,\vecW),
    \] so \(\sbfB\) is well-defined on the quotient space \(\vsV/\R e_0\).
\end{rexample}

A \definedTerm{linear map} from a quadratic space \((\vsV,\sbfB)\) to a quadratic space \((\vsV',\sbfB')\) is just a linear map from \(\vsV\) to \(\vsV'\). A (not necessarily bijective) linear map \(\isomF:(\vsV,\sbfB)\to(\vsV',\sbfB')\) is an \definedTerm{isometry} if \(\sbfB'(\isomF(\vecU),\isomF(\vecV))=\sbfB(\vecU,\vecV)\) for all vectors \(\vecU,\vecV\) in \(\vsV\). Given two quadratic spaces \(\vsU\) and \(\vsV\), we say that \(\vsU\) is \definedTerm{isometric} to \(\vsV\) if there exists a bijective isometry \(\isomF:\vsU\to\vsV\), and write \(\vsU\simeq\vsV\). The following lemma comes from an exercise in \citet[Exercise 8, p. 144]{snapper_troyer1}: 

\begin{lemma}\label{theorem:quotient_quadratic_space}The symmetric bilinear form \(\sbfB\) is well-defined on the quotient space \(\vsV/\vsU\) if and only if \(\vsU\) is radical in \(\vsV\).\end{lemma}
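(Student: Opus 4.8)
The plan is first to make precise what ``well-defined on the quotient'' means: it is the assertion that the rule \((\vecU+\vsU,\vecV+\vsU)\mapsto\sbfB(\vecU,\vecV)\) yields a genuine function on \((\vsV/\vsU)\times(\vsV/\vsU)\), i.e.\ that its value does not depend on which representatives of the two cosets are chosen. Once this is settled, the induced map is automatically symmetric and bilinear, since \(\sbfB\) is and the canonical projection \(\vsV\to\vsV/\vsU\) is linear and surjective; so the whole content of the lemma lies in characterising when representative-independence holds.

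For the ``if'' direction I would assume \(\vsU\trianglelefteq\vsV\), so that every element of \(\vsU\) is degenerate, and take arbitrary representatives \(\vecU+\vecA\) and \(\vecV+\vecB\) of the cosets \(\vecU+\vsU\) and \(\vecV+\vsU\), where \(\vecA,\vecB\in\vsU\). Expanding by bilinearity,
\[
    \sbfB(\vecU+\vecA,\vecV+\vecB)=\sbfB(\vecU,\vecV)+\sbfB(\vecU,\vecB)+\sbfB(\vecA,\vecV)+\sbfB(\vecA,\vecB),
\]
and each of the last three terms vanishes because it has a degenerate vector (\(\vecA\) or \(\vecB\)) in one of its arguments. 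Hence \(\sbfB(\vecU,\vecV)\) is unchanged under any change of representatives, and the form descends to \(\vsV/\vsU\).

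For the ``only if'' direction I would argue directly: suppose the rule above is well-defined. Fix \(\vecA\in\vsU\) and \(\vecW\in\vsV\). Since \(\vecA+\vsU=0+\vsU\), well-definedness gives \(\sbfB(\vecA,\vecW)=\sbfB(0,\vecW)=0\); as \(\vecW\) ranges over all of \(\vsV\) this says \(\vecA\in\rad\vsV\), and as \(\vecA\) ranges over \(\vsU\) we conclude \(\vsU\leq\rad\vsV\), i.e.\ \(\vsU\trianglelefteq\vsV\).

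I do not expect a genuine obstacle: both implications reduce to a one-line computation with the bilinear expansion, and the only thing demanding a little care is stating the definition of ``well-defined'' (independence of representatives in \emph{both} slots) cleanly before the computations. I would present the argument in the order above, noting in passing that this recovers the cited exercise of Snapper and Troyer.
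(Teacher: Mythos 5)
Your proof is correct: the ``if'' direction via the bilinear expansion and the ``only if'' direction via comparing the representatives \(\vecA\) and \(0\) of the coset \(\vecA+\vsU\) are both sound, and together they establish exactly the stated equivalence. The paper itself omits the proof, deferring to the cited exercise of Snapper and Troyer, and your argument is the standard one that exercise intends, so there is nothing to add.
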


When the condition of the above lemma holds, then \((\vsV/\vsU,\sbfB)\) is a quadratic space in its own right, and the canonical projection \(\pi:(\vsV,\sbfB)\to(\vsV/\vsU,\sbfB)\) is an isometry. As the identity map and compositions of isometries are isometries, and using standard properties of function composition, we can form the category \(\Quad_\F\), whose objects are quadratic spaces over \(\F\) and whose morphisms are isometries. We will refer to the set of isometries from \(\vsU\) to \(\vsV\) as \(\Quad_\F(\vsU,\vsV)\). A bijective isometry from \(\vsV\) to itself is called an \definedTerm{orthogonal transformation} of \(\vsV\); the set of these forms the \definedTerm{orthogonal group} \(\O(\vsV)\). Following \citet{havlicek2}, we shall also define the \definedTerm{weak orthogonal group} \(\O'(\vsV)\) to consist of those orthogonal transformations that fix \(\rad\vsV\) elementwise. Note that \(\O(\vsV)=\O'(\vsV)\) exactly when \(\vsV\) is nondegenerate.

\begin{rexample}
    Consider an isometry \(\isomF\in\Quad_\R(\vsV,\vsV)\). We must have that \(\isomF\) fixes \(\R e_0\) setwise, as each element \(\lambda e_0\) for some \(\lambda\in\R\) must be mapped to a null element and all null elements are in \(\R e_0\). Similarly \(\isomF\) fixes \(\vsV\setminus\R e_0\) setwise: every element \(\vecV\in\vsV\setminus\R e_0\) is nonnull and must be mapped to a nonnull element, of which all lie outside \(\R e_0\). Geometrically, \(\O(\vsV)\) consists of all (bijective) similarity transformations of space, and \(\O'(\vsV)\)\ of all rigid transformations; hence the latter group is isomorphic to the Euclidean group \(\E(3)\). As the quotient space \(\vsV/\replaced{\R e_0}{\gen e_0}\) is nondegenerate, \(\O(\vsV/\replaced{\R e_0}{\gen e_0})=\O'(\vsV/\replaced{\R e_0}{\gen e_0})\), and in fact they are isomorphic to the more famous orthogonal group \(\O(3)\).
\end{rexample}

Let \(\vsU\), \(\vsV\), and \(\vsW\) be quadratic spaces and \(\isomI:\vsU\to\vsV\) and \(\isomP:\vsV\to\vsW\) isometries such that the sequence \[
    0\to\vsU\xrightarrow\isomI\vsV\xrightarrow\isomP\vsW\to0
\] is exact, i.e., \(\isomI\) is injective, \(\isomP\) is surjective, and \(\im\isomI=\ker\isomP\). Then we say that \(\vsV\) is an \definedTerm{extension} of \(\vsW\) by \(\vsU\). It is easy to see that the kernel of any isometry from \(\vsV\) to \(\vsW\) is a radical subspace, so only degenerate quadratic spaces can be nontrivial extensions (i.e., where \(\vsU\neq0\)).

\begin{rexample}
    The inclusion map \(\iota:\R e_0\to\vsV\) is injective and the canonical projection \(\pi:\vsV\to\vsV/\R e_0\) surjective, and the image of \(\iota\) is exactly the kernel of \(\pi\). That is, all multiples of \(e_0\), and only multiples of \(e_0\), are sent to \(\eqc0\) in the quotient. Thus \(\vsV\) is an extension of \(\vsV/\R e_0\) by \(\R e_0\).
\end{rexample}

We say that an extension \(\vsU\to\vsV\xrightarrow\isomP\vsW\) \definedTerm{splits} if \(\isomP\) has a \definedTerm{section}, i.e., an isometry \(\isomS:\vsW\to\vsV\) such that \(\isomP\circ\isomS=\id_\vsW\). Note that if \(\vsW\leq\vsV\) then the extension splits using \(\isomS=\id_\vsW\), so a split extension is like finding an isometric copy of \(\vsW\) within \(\vsV\). An internal direct sum \(\vsW\oplus\vsU\) in a quadratic space \(\vsV\) is an (internal) \definedTerm{orthogonal sum} if \(\sbfB(\vecU,\vecW)=0\) for all \(\vecU\in\vsU\) and \(\vecW\in\vsW\), and is written \(\vsW\operp\vsU\) if so. A \definedTerm{complement}\footnote{When \(\vsV\) is infinite-dimensional, finding a complement may require the Axiom of Choice.} of a subspace \(\vsU\) of \(\vsV\) is a subspace \(\vsW\) such that the vector space \(\vsV=\vsW\oplus\vsU\). If, moreover, \(\vsV=\vsW\operp\vsU\) then we call \(\vsW\) an \definedTerm{orthogonal complement}. This differs from the standard definition of orthogonal complement; when \(\vsU\) is nondegenerate then it has a unique orthogonal complement which is equal to the set given by the usual definition.

\begin{rexample}
    For each point \(\ptP\) in \replaced{Euclidean space}{the Euclidean plane}, the subspace \(\vsV_\ptP\) of planes through \(\ptP\) is a complement of \(\R e_0\), and in fact these are the only complements of \(\R e_0\).
\end{rexample}

We include the following lemma here as we will be referring back to it multiple times later; the proof is straightforward and omitted.

\begin{lemma}\label{theorem:orthogonal_complements}Let \(\vsU\) and \(\vsW\) be subspaces of \(\vsV\) with \(\vsU\trianglelefteq\vsV\). Then the following are equivalent:\begin{enumerate}
    \item\(\vsW\) is an orthogonal complement of \(\vsU\),
    \item\(\vsW\) is a complement of \(\vsU\), and
    \item\(\vsW\) is isometric to \(\vsV/\vsU\).
\end{enumerate} Moreover, for each complement \(\vsW\), the canonical projection \(\pi:\vsV\to\vsV/\vsU\) has a unique section with image \(\vsW\), given by \(\omega_\vsW\coloneqq(\pi\circ\iota_\vsW)^{-1}\), where \(\iota_\vsW\) is the inclusion of \(\vsW\) in \(\vsV\).\end{lemma}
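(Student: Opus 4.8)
My plan is to prove \((1)\Leftrightarrow(2)\) and \((2)\Leftrightarrow(3)\) in turn and then read off the final clause, using throughout that the canonical projection \(\pi:\vsV\to\vsV/\vsU\) is an isometry (as recorded just after \Cref{theorem:quotient_quadratic_space}) and that the inclusion \(\iota_\vsW:\vsW\to\vsV\) is trivially an isometry; the composite \(\pi\circ\iota_\vsW:\vsW\to\vsV/\vsU\) is then automatically an isometry, so every remaining point reduces to a linear-algebraic fact about complements. The implication \((1)\Rightarrow(2)\) needs no argument, an orthogonal complement being in particular a complement. For \((2)\Rightarrow(1)\), suppose \(\vsV=\vsW\oplus\vsU\); given \(\vecU\in\vsU\), the hypothesis \(\vsU\trianglelefteq\vsV\) gives \(\vecU\in\rad\vsV\), so \(\sbfB(\vecU,\vecW)=0\) for every \(\vecW\in\vsW\leq\vsV\), that is, \(\vsV=\vsW\operp\vsU\). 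This is the only place degeneracy of \(\vsU\) enters, and it is exactly why the distinction between a complement and an orthogonal complement, which matters in general, disappears here.

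Next I would dispatch \((2)\Rightarrow(3)\) and the ``moreover'' clause together. Assume \(\vsW\) is a complement of \(\vsU\). Then \(\ker(\pi\circ\iota_\vsW)=\vsW\cap\vsU=0\), and \(\pi(\vsW)=\pi(\vsV)=\vsV/\vsU\) because \(\vsW+\vsU=\vsV\) and \(\pi\) vanishes on \(\vsU\); hence \(\pi\circ\iota_\vsW\) is a bijective isometry and \(\vsW\simeq\vsV/\vsU\). Its inverse \(\omega_\vsW\coloneqq(\pi\circ\iota_\vsW)^{-1}:\vsV/\vsU\to\vsW\) is then a bijective isometry, and the composite \(\iota_\vsW\circ\omega_\vsW\) is a section of \(\pi\) with image \(\vsW\). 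For uniqueness, any section \(\isomS:\vsV/\vsU\to\vsV\) with image \(\vsW\) factors as \(\iota_\vsW\circ\tau\) for a linear map \(\tau:\vsV/\vsU\to\vsW\), and then \(\pi\circ\iota_\vsW\circ\tau=\id_{\vsV/\vsU}\) forces \(\tau=\omega_\vsW\).

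The implication \((3)\Rightarrow(2)\) is the step I expect to be the main obstacle. The natural route is once more through \(\pi\circ\iota_\vsW:\vsW\to\vsV/\vsU\): if this map is bijective, then \(\vsW\cap\vsU=\ker(\pi\circ\iota_\vsW)=0\), surjectivity gives \(\vsW+\vsU=\vsV\), and \(\vsW\) is a complement. The catch is that an abstract isometry \(\vsW\simeq\vsV/\vsU\) need not be implemented by \(\pi\circ\iota_\vsW\), so its bijectivity is not automatic; indeed, for a radical subspace \(\vsU\subsetneq\rad\vsV\) one can have \(\vsW\) isometric to \(\vsV/\vsU\) while meeting \(\vsU\) nontrivially. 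I would close the gap in one of two ways: either by reading (3) as the assertion that the canonical projection restricts to a bijective isometry \(\vsW\to\vsV/\vsU\) — the natural way such an isometry presents itself — after which there is nothing left to prove; or, in the case \(\vsU=\rad\vsV\) relevant to the applications, by noting that \(\vsV/\vsU\) is then nondegenerate, so an isometry \(\vsW\simeq\vsV/\vsU\) makes \(\vsW\) nondegenerate. In that situation any \(\vecW\in\vsW\cap\vsU\) lies in \(\rad\vsV\), hence is orthogonal to all of \(\vsW\), so \(\vecW\in\rad\vsW=0\); since \(\dim\vsW=\dim(\vsV/\vsU)=\dim\vsV-\dim\vsU\), this upgrades \(\vsW\cap\vsU=0\) to \(\vsV=\vsW\oplus\vsU\).
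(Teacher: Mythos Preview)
The paper omits the proof of this lemma entirely, calling it straightforward, so there is no argument to compare yours against. Your treatment of $(1)\Leftrightarrow(2)$, $(2)\Rightarrow(3)$, and the ``moreover'' clause is correct and is exactly the natural route: the radicality of $\vsU$ makes every complement automatically orthogonal, and once $\vsW$ is a complement the restriction $\pi\circ\iota_\vsW$ is visibly a bijective isometry whose inverse furnishes the unique section with image $\vsW$.

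You are also right to flag $(3)\Rightarrow(2)$ as the genuine obstacle, and your diagnosis is accurate: as literally stated the implication fails. Take $\vsV=\F^2$ with the zero form, $\vsU=\F e_1\trianglelefteq\vsV$, and $\vsW=\F e_1$. Then $\vsW$ and $\vsV/\vsU$ are both one-dimensional with trivial form, hence isometric, yet $\vsW=\vsU$ is not a complement of $\vsU$. Your two proposed repairs are both sound. Reading (3) as ``$\pi\circ\iota_\vsW$ is a bijective isometry'' makes the equivalence immediate and is the most natural intended meaning. Your alternative argument for the case $\vsU=\rad\vsV$ in finite dimension is correct: nondegeneracy of $\vsV/\vsU$ transfers to $\vsW$, forcing $\vsW\cap\vsU\subseteq\rad\vsW=0$, and the dimension count finishes it. Since the paper's subsequent use of this lemma only invokes $(1)\Leftrightarrow(2)$ and the section $\omega_\vsW$, the defect in (3) does no damage downstream --- but you have correctly located a gap in the statement as written.
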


\begin{corollary}\label{theorem:playfair_decomposition}Suppose \(\ezero\in\rad\vsV\) and \(\vsV=\vsW\operp\F\ezero\). Then for every \(\vecV\in\vsV\) there exists \(\lambda\in\F\) such that \(\vecV=\vecW+\lambda\ezero\), where \(\vecW=(\omega_\vsW\circ\pi)(\vecV)\in\vsW\).\end{corollary}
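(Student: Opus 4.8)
The plan is to reduce the statement to the underlying vector-space direct sum decomposition and then pin down the \(\vsW\)-component using the explicit formula \(\omega_\vsW=(\pi\circ\iota_\vsW)^{-1}\) furnished by \Cref{theorem:orthogonal_complements}. So the first step is to check that \Cref{theorem:orthogonal_complements} is applicable with \(\vsU=\F\ezero\): since \(\ezero\in\rad\vsV\) we have \(\F\ezero\leq\rad\vsV\), hence \(\F\ezero\trianglelefteq\vsV\); and the hypothesis \(\vsV=\vsW\operp\F\ezero\) gives in particular \(\vsV=\vsW\oplus\F\ezero\) as vector spaces, so \(\vsW\) is a complement of \(\F\ezero\). (The orthogonality \(\operp\) carries no extra information here, as \(\ezero\) is degenerate, but it does no harm.) Consequently \(\pi\circ\iota_\vsW\colon\vsW\to\vsV/\F\ezero\) is a bijective isometry whose inverse is \(\omega_\vsW\), and \(\pi\circ\omega_\vsW=\id\).

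Next, fix \(\vecV\in\vsV\). Because \(\vsV=\vsW\oplus\F\ezero\), there are unique \(\vecW\in\vsW\) and \(\lambda\in\F\) with \(\vecV=\vecW+\lambda\ezero\); it remains only to identify this \(\vecW\) with \((\omega_\vsW\circ\pi)(\vecV)\). Applying \(\pi\) and using \(\pi(\ezero)=0\) gives \(\pi(\vecV)=\pi(\vecW)=(\pi\circ\iota_\vsW)(\vecW)\), where on the right \(\iota_\vsW\) is simply the inclusion of \(\vsW\) into \(\vsV\). Applying \(\omega_\vsW=(\pi\circ\iota_\vsW)^{-1}\) then yields \((\omega_\vsW\circ\pi)(\vecV)=\vecW\), which is exactly the asserted decomposition.

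I do not expect a genuine obstacle: the corollary is essentially a repackaging of \Cref{theorem:orthogonal_complements}. The only points requiring care are (i) verifying that it is \(\F\ezero\), and not merely \(\ezero\), that is radical in \(\vsV\), which is immediate; and (ii) keeping the bookkeeping straight when composing \(\iota_\vsW\), \(\pi\), and \(\omega_\vsW\), i.e. distinguishing \(\vsW\) as an abstract quadratic space from \(\vsW\) as a subspace of \(\vsV\). Once the maps are lined up the identification \((\omega_\vsW\circ\pi)(\vecV)=\vecW\) is a one-line computation, and the existence of \(\lambda\) is automatic from the direct sum.
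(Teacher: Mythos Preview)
Your proposal is correct and follows exactly the route the paper intends: the corollary is stated without proof in the paper, as it is meant to be immediate from \Cref{theorem:orthogonal_complements} together with the direct-sum decomposition \(\vsV=\vsW\oplus\F\ezero\), which is precisely what you unpack. Your only additions are the routine verifications that \(\F\ezero\trianglelefteq\vsV\) and that \(\omega_\vsW\circ\pi\) recovers the \(\vsW\)-component, and these are handled cleanly.
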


We refer to \(\omega_\vsW\) as the \definedTerm{canonical section} of \(\pi\) for \(\vsW\). This lemma shows that every quadratic space extension splits; in short, a quadratic space contains all of its quotients. We now know that any quadratic space can be written as the orthogonal sum of any of its radical subspaces and any complement of that radical subspace. This is not the only way to decompose a quadratic space (see for example \citet[\S4]{lam1}), but this approach using a split extension is also used for algebras and groups, and will give us a new perspective on degenerate Clifford algebras.

\begin{rexample}
    For any point \(\ptP\), the subspace \(\vsV_\ptP\) is isometric to \(\vsV/\R e_0\); let us see what that isometry looks like in coordinates. Let \(\ptP=(x,y,z)\) and \(\vecV=\vecV_0e_0+\vecV_1e_1+\vecV_2e_2+\vecV_3e_3\). Then \(\vecV\) belongs to the coset \[
        \eqc\vecV=\{(\vecV_0+\lambda)e_0+\vecV_1e_1+\vecV_2e_2+\vecV_3e_3\mid\lambda\in\R\}.
    \] For a plane represented by an element of \(\eqc\vecV\) to pass through \(\ptP=(x,y,z)\), we need the unique solution \(\lambda=-\vecV_0-\vecV_1x-\vecV_2y-\vecV_3z\). We thus define the map \[\begin{split}
        \omega_{(x,y,z)}:\vsV/\R e_0&\to\vsV_{(x,y,z)}\\
        \eqc{\vecV_0e_0+\vecV_1e_1+\vecV_2e_2+\vecV_3e_3}&\mapsto(\vecV_0+\lambda)e_0+\vecV_1e_1+\vecV_2e_2+\vecV_3e_3,
    \end{split}\] which is the isometry we need between \(\vsV/\R e_0\) and \(\vsV_\ptP\). The existence of this isometry shows that every plane through \(\ptP\) belongs to a unique parallel class of planes, and that every parallel class of planes contains a unique plane through \(\ptP\). We thus arrive at a nice algebraic demonstration of the fundamental notion of affine geometry:
\end{rexample}

\begin{theorem}[Playfair's Axiom in 3D]Let \(\ptP\) be a point and \(\Pi\) a plane in Euclidean space. Then there is a unique plane through \(\ptP\) parallel to \(\Pi\).\end{theorem}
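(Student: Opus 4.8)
The plan is to translate the geometric statement into the structure of the quadratic space $\vsV$ of the running example and then invoke the splitting machinery of \Cref{theorem:orthogonal_complements}. Recall the dictionary: a plane in Euclidean $3$-space is represented, up to a nonzero scalar, by an element of $\vsV\setminus\R\ezero$; two such elements represent parallel planes precisely when one is a nonzero scalar multiple of the other modulo $\R\ezero$, i.e.\ when they have the same image in $\vsV/\R\ezero$ up to scaling; and the planes through a fixed point $\ptP$ are represented exactly by the nonzero elements of the subspace $\vsV_\ptP$. So the theorem amounts to the assertion that, for any $\vecV\in\vsV\setminus\R\ezero$, there is a unique line $\R\vecW\leq\vsV_\ptP$ with $\vecW\neq0$ and $\pi(\vecW)\in\R\,\pi(\vecV)$, where $\pi:\vsV\to\vsV/\R\ezero$ is the canonical projection.

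First I would record that $\R\ezero=\rad\vsV\trianglelefteq\vsV$ and that $\vsV_\ptP$ is a complement of $\R\ezero$ (the observation made in the example following \Cref{theorem:orthogonal_complements}; it can also be checked directly, since a plane through $\ptP$ is pinned down by a nonzero normal direction together with the affine condition of passing through $\ptP$, whence $\vsV_\ptP\cap\R\ezero=0$ and $\dim\vsV_\ptP=3$). Then \Cref{theorem:orthogonal_complements} supplies the canonical section $\omega\coloneqq\omega_{\vsV_\ptP}:\vsV/\R\ezero\to\vsV_\ptP$ of $\pi$, a bijective (linear) isometry whose inverse is the restriction of $\pi$ to $\vsV_\ptP$.

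For existence, set $\vecW\coloneqq\omega(\pi(\vecV))\in\vsV_\ptP$; by \Cref{theorem:playfair_decomposition} we may write $\vecV=\vecW+\lambda\ezero$ for some $\lambda\in\R$. Since $\vecV\notin\R\ezero$ we have $\pi(\vecV)\neq\eqc0$, hence $\vecW\neq0$ (as $\omega$ is injective and linear, so $\omega(\eqc0)=0$); because $\vsV_\ptP\cap\R\ezero=0$ this forces $\vecW\notin\R\ezero$, so $\vecW$ represents a genuine plane. It passes through $\ptP$ because $\vecW\in\vsV_\ptP$, and it is parallel to $\Pi$ because $\pi(\vecW)=\pi(\vecV)$. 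For uniqueness, suppose $\vecW'\in\vsV_\ptP$ also represents a plane through $\ptP$ parallel to $\Pi$; then $\pi(\vecW')=\mu\,\pi(\vecV)$ for some nonzero $\mu\in\R$, i.e.\ $\pi(\vecW')=\pi(\mu\vecW)$ with both $\vecW'$ and $\mu\vecW$ in $\vsV_\ptP$. Since $\pi|_{\vsV_\ptP}$ is injective we conclude $\vecW'=\mu\vecW$, so $\vecW'$ represents the same plane as $\vecW$.

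The genuinely mathematical content — that $\pi$ restricts to an isomorphism $\vsV_\ptP\xrightarrow{\sim}\vsV/\R\ezero$ — is already packaged in \Cref{theorem:orthogonal_complements}, so the only real work is bookkeeping for the projective (homogeneous) representation: tracking scalar multiples, handling the exclusion of $\R\ezero$, and confirming that ``parallel'' corresponds exactly to ``equal image under $\pi$, up to scaling''. That translation step is the one place I expect any slip could occur, and it is where I would take the most care.
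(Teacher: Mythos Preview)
Your proposal is correct and follows essentially the same approach as the paper: both arguments apply the canonical section $\omega_{\vsV_\ptP}$ to $\pi(\vecV)$ to produce the required plane, with uniqueness coming from the bijectivity of $\omega_{\vsV_\ptP}$ (equivalently, injectivity of $\pi|_{\vsV_\ptP}$). Your version is simply more explicit about the projective bookkeeping---separating existence from uniqueness and tracking scalar multiples---whereas the paper compresses this into a single sentence.
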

\begin{proof}
    Let \(\vecV\in\vsV\) be a representative of \(\Pi\). The projection \(\pi\) sends \(\vecV\) to the coset \(\eqc\vecV\) representing \(\Pi\)'s parallel class \(\eqc\Pi\); the section \(\omega_\ptP\) picks out the representative (with the same orientation and magnitude as \(\vecV\)) of the unique plane in \(\eqc\Pi\) that passes through \(\ptP\).
\end{proof}

For this reason, we refer to \(\pi_\vsW=\omega_\vsW\circ\pi\) as the \definedTerm{Playfair projection} onto \(\vsW\), and to the decomposition \(\vecV=\vecW+\lambda\ezero\) given in \Cref{theorem:playfair_decomposition} as the \definedTerm{Playfair decomposition} of \(\vecV\) at \(\vsW\). We will return to the Playfair projection repeatedly in the sequel.

\begin{rexample}
    Note that if \(\ezero\notin\rad\vsV\), as is the case in elliptic and hyperbolic PGA, then by \Cref{theorem:quotient_quadratic_space} we cannot form the quotient space \(\vsV/\R\ezero\), and thus Playfair's axiom does not hold. The subspaces \(\vsV_\ptP\) and \(\vsV_\ptQ\) for points \(\ptP\) and \(\ptQ\) still turn out to be isometric, but they are not canonically so without \(\pi\), \(\omega_\ptP\), and \(\omega_\ptQ\). Thus it is precisely the degeneracy of \(\vsV\) that allows us to use it to represent metric affine geometry.
\end{rexample}

\section{Decomposing a degenerate Clifford algebra}\label{section:clifford_algebra}
Let \(\vsV=(\vsV,\sbfB)\) be a quadratic space. The \definedTerm{Clifford algebra} of \(\vsV\) is the quotient algebra \[
    \Cl(\vsV)=T(\vsV)/J(\vsV),
\] where \(T(\vsV)\) is the tensor algebra of \(\vsV\) and \(J(\vsV)\) is the ideal of \(T(\vsV)\) generated by \[
    \vecU\otimes\vecV+\vecV\otimes\vecU-2\sbfB(\vecU,\vecV),
\] for all \(\vecU,\vecV\in\vsV\). As we are working over a field not of characteristic 2, this construction is equivalent to the one given in \citet[Chapter III]{chevalley1} (which uses a quadratic form). The universal property of the Clifford algebra, Chevalley's Theorem 3.1, implies the existence of the functor \(\Cl:\Quad_\F\to\Alg_\F\), where \(\Alg_\F\) denotes the category of associative algebras and associative algebra homomorphisms over \(\F\). This functor maps an isometry \(\isomF:\vsV\to\vsV'\) to its \definedTerm{Clifford extension}, the unique unital algebra homomorphism \(\Cl(\isomF):\Cl(\vsV)\to\Cl(\vsV')\) extending \(\isomF\). In fact, every unital algebra homomorphism \(\phi:\Cl(\vsV)\to\Cl(\vsV')\) such that \(\phi(\vsV)\subseteq\vsV'\) is the Clifford extension of an isometry, so we call \(\phi\) a \definedTerm{Clifford algebra homomorphism}. We also define a \definedTerm{Clifford algebra derivation} \(\derD:\Cl(\vsV)\to\replaced{\Cl(\vsV)}{\Cl(\vsV')}\) as a derivation -- i.e., a linear map satisfying \(\derD(\mvecX\mvecY)=\mvecX\derD(\mvecY)+\derD(\mvecX)\mvecY\) for all \(\mvecX,\mvecY\in\Cl(\vsV)\) -- such that \(\derD(\vsV)\subseteq\vsV\deleted{'}\).

We recall the linear isomorphism \(\theta:\Cl(\vsV)\to\Ex(\vsV)\), as shown by \citet[p.39]{chevalley1}. We define the wedge product on the Clifford algebra via said isomorphism: \[\begin{split}
    \wedge:\Cl(\vsV)\times\Cl(\vsV)&\to\Cl(\vsV)\\
    (\mvecX,\mvecY)&\mapsto\mvecX\wedge\mvecY\coloneqq\theta^{-1}(\theta(\mvecX)\wedge\theta(\mvecY)),
\end{split}\] and the direct sum decomposition \begin{align*}
    \Cl(\vsV)={}&\Cl^0(\vsV)\oplus\Cl^1(\vsV)\oplus\dotsb\oplus\Cl^n(\vsV)\\
    \coloneqq{}&\theta^{-1}(\Ex^0(\vsV))\oplus\theta^{-1}(\Ex^1(\vsV))\oplus\dotsb\oplus\theta^{-1}(\Ex^n(\vsV)).
\end{align*} We refer to \(\Cl^k(\vsV)\) as the \definedTerm{grade-\(k\) part} of \(\Cl(\vsV)\); note that this gives only a vector space grading of \(\Cl(\vsV)\) and not an algebra grading. In any quadratic space, the map \((-\id):\vecV\mapsto-\vecV\) is an orthogonal transformation, whose Clifford extension is the \definedTerm{main automorphism} (or \definedTerm{grade involution}) \(\gradeInv\coloneqq\Cl(-\id)\) and has the effect of negating the odd-graded parts of \(\Cl(\vsV)\) and preserving the even-graded ones. The tensor algebra \(T(\vsV)\) is finitely spanned by simple tensors \(\vecV_1\otimes\dotsb\otimes\vecV_k\), and the Clifford algebra, being a quotient of \(T(\vsV)\), inherits this property of being finitely spanned by finite products of elements of \(\vsV\).

From now on we assume that \(\vsV\) is degenerate and pick a one-dimensional radical subspace \(\F\ezero\). We assume also that \(\F\ezero\) has an orthogonal complement \(\vsW\) in \(\vsV\); recall by \Cref{theorem:orthogonal_complements} that every complement of \(\F\ezero\) is orthogonal. These assumptions hold for our running example, but also for a general degenerate quadratic space \(\vsV\) if it is finite-dimensional or assuming the Axiom of Choice, including when \(\F\ezero\) is a proper subspace of \(\rad\vsV\).

\begin{rexample}The Clifford algebra \(\Cl(\vsV)\) of our running example is known as \definedTerm{Euclidean PGA}. In this context, an element \(\vecV\in\vsV\) represents not just a plane but also reflection in that plane; see \citet{gunn1} or \citet{dorst_de_keninck1} for an introduction.\end{rexample}

\begin{lemma}\label{theorem:commuting_ezero}Let \(\mvecX\in\Cl(\vsV)\). Then \(\ezero\mvecX=\gradeInv(\mvecX)\ezero\).\end{lemma}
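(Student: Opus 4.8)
The plan is to exploit the fact, noted just above, that \(\Cl(\vsV)\) is finitely spanned by finite products of elements of \(\vsV\), together with the observation that both sides of the claimed identity are \(\F\)-linear in \(\mvecX\): the map \(\mvecX\mapsto\ezero\mvecX\) is linear, and \(\mvecX\mapsto\gradeInv(\mvecX)\ezero\) is linear because \(\gradeInv\) is a unital algebra homomorphism (hence in particular \(\F\)-linear) and right multiplication by \(\ezero\) is linear. So it suffices to verify the identity on a spanning set, and I would check it on products \(\mvecX=\vecV_1\dotsm\vecV_k\) with each \(\vecV_i\in\vsV\), by induction on \(k\).

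For \(k=0\) the element \(\mvecX\) is a scalar, \(\gradeInv(\mvecX)=\mvecX\), and the identity is immediate. The crucial ingredient is the case \(k=1\): because \(\ezero\in\rad\vsV\) we have \(\sbfB(\ezero,\vecV)=0\) for every \(\vecV\in\vsV\), so the defining relation of the Clifford algebra gives \(\ezero\vecV+\vecV\ezero=2\sbfB(\ezero,\vecV)=0\), that is, \(\ezero\vecV=-\vecV\ezero=\gradeInv(\vecV)\ezero\), using that \(\gradeInv\) negates grade-one elements. (Note this is exactly where degeneracy, not just nullity, of \(\ezero\) is used.)

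For the inductive step, suppose \(\ezero\mvecX=\gradeInv(\mvecX)\ezero\) for a product \(\mvecX\) of \(k\) vectors and let \(\vecV\in\vsV\). Then, moving \(\ezero\) past \(\vecV\) and then past \(\mvecX\),
\begin{align*}
    \ezero(\vecV\mvecX)&=(\ezero\vecV)\mvecX=-(\vecV\ezero)\mvecX=-\vecV(\ezero\mvecX)\\
    &=-\vecV\gradeInv(\mvecX)\ezero=\gradeInv(\vecV)\gradeInv(\mvecX)\ezero=\gradeInv(\vecV\mvecX)\ezero,
\end{align*}
where the last equality uses that \(\gradeInv\) is multiplicative. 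Since every product of \(k+1\) vectors is of the form \(\vecV\mvecX\), this closes the induction, and by linearity the identity extends to all of \(\Cl(\vsV)\).

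I do not expect a genuine obstacle here; the only points that need care are the reduction to a spanning set, which relies on \(\gradeInv\) being linear, and remembering that \(\ezero\) anticommutes with every vector precisely because it is degenerate rather than merely null.
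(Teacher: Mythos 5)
Your proposal is correct and follows essentially the same route as the paper's (much terser) proof: decompose \(\mvecX\) into a linear combination of products of vectors and use that \(\sbfB(\ezero,\vecV)=0\) for all \(\vecV\in\vsV\) to anticommute \(\ezero\) past each factor. Your induction and the explicit use of the multiplicativity of \(\gradeInv\) simply spell out the details the paper leaves to the reader.
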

\begin{proof}
    Decompose \(\mvecX\) into a linear combination of products of elements of \(\vsV\) and apply the fact that \(\sbfB(\ezero,\vecV)=0\) for all \(\vecV\in\vsV\).
\end{proof}

Recall from \Cref{theorem:playfair_decomposition} that for every \(\vecV\in\vsV\) there exists \(\lambda\in\F\) such that \(\vecV=\pi_\vsW(\vecV)+\lambda\ezero\), where we call \(\pi_\vsW\) the Playfair projection onto a complement \(\vsW\) of \(\F\ezero\). We now extend this result to the Clifford algebra.

\begin{lemma}\label{theorem:clifford_algebra_playfair_decomposition}For every \(\mvecX\in\Cl(\vsV)\) there exists \(\mvecY\in\Cl(\vsW)\) such that \(\mvecX=\Cl(\pi_\vsW)(\mvecX)+\mvecY\ezero\).\end{lemma}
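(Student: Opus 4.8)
The plan is to reduce the statement to a purely vector-space decomposition $\Cl(\vsV)=\Cl(\vsW)+\Cl(\vsW)\ezero$ and then use two formal properties of the Playfair projection to identify which summand must equal $\Cl(\pi_\vsW)(\mvecX)$. First I would record those properties. Since $\pi_\vsW=\omega_\vsW\circ\pi$ and, by \Cref{theorem:orthogonal_complements}, $\omega_\vsW=(\pi\circ\iota_\vsW)^{-1}$, we have $\pi_\vsW\circ\iota_\vsW=\id_\vsW$ while $\pi_\vsW(\ezero)=\omega_\vsW(\pi(\ezero))=\omega_\vsW(0)=0$. Applying the functor $\Cl$, and using that $\Cl(\pi_\vsW)$ is an algebra homomorphism agreeing with $\pi_\vsW$ on $\vsV$, this gives: (a) $\Cl(\pi_\vsW)$ restricts to the identity on $\Cl(\vsW)$; and (b) $\Cl(\pi_\vsW)(\mvecZ\ezero)=\Cl(\pi_\vsW)(\mvecZ)\,\pi_\vsW(\ezero)=0$ for every $\mvecZ\in\Cl(\vsV)$.

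Next I would prove $\Cl(\vsV)=\Cl(\vsW)+\Cl(\vsW)\ezero$. Write $S$ for the right-hand side; it is a linear subspace of $\Cl(\vsV)$ and contains $1$, so since $\Cl(\vsV)$ is spanned by $1$ together with finite products $\vecV_1\dotsm\vecV_k$ of vectors it suffices to show each such product lies in $S$, which I would do by induction on $k$. The case $k=1$ is exactly the Playfair decomposition $\vecV=\pi_\vsW(\vecV)+\lambda\ezero$ of \Cref{theorem:playfair_decomposition}. For the inductive step, write $\vecV_1=\vecW_1+\lambda_1\ezero$ with $\vecW_1\in\vsW$ and $\lambda_1\in\F$, and $\vecV_2\dotsm\vecV_k=\mvecA'+\mvecB'\ezero$ with $\mvecA',\mvecB'\in\Cl(\vsW)$ by induction; then
\[
    \vecV_1\dotsm\vecV_k=\vecW_1\mvecA'+\vecW_1\mvecB'\ezero+\lambda_1\ezero\mvecA'+\lambda_1\ezero\mvecB'\ezero .
\]
Now \Cref{theorem:commuting_ezero} gives $\ezero\mvecA'=\gradeInv(\mvecA')\ezero$ and $\ezero\mvecB'\ezero=\gradeInv(\mvecB')\ezero^2$, while $\ezero^2=\sbfB(\ezero,\ezero)=0$ since $\ezero$ is degenerate and hence null; and $\gradeInv$ preserves $\Cl(\vsW)$ because it merely negates odd grades. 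Collecting terms yields $\vecV_1\dotsm\vecV_k=\vecW_1\mvecA'+\bigl(\vecW_1\mvecB'+\lambda_1\gradeInv(\mvecA')\bigr)\ezero\in S$, completing the induction.

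Finally, given an arbitrary $\mvecX\in\Cl(\vsV)$, write $\mvecX=\mvecA+\mvecB\ezero$ with $\mvecA,\mvecB\in\Cl(\vsW)$ using the previous step, and apply $\Cl(\pi_\vsW)$: by (a) and (b) we get $\Cl(\pi_\vsW)(\mvecX)=\mvecA$, so $\mvecX=\Cl(\pi_\vsW)(\mvecX)+\mvecB\ezero$ and $\mvecY\coloneqq\mvecB\in\Cl(\vsW)$ works. I expect the only real obstacle to be the bookkeeping in the inductive step --- checking that pushing $\ezero$ to the right via \Cref{theorem:commuting_ezero} keeps every coefficient inside $\Cl(\vsW)$ (which is where invariance of $\Cl(\vsW)$ under $\gradeInv$ is needed) and that the ``$\ezero\cdots\ezero$'' term genuinely vanishes. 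One also uses implicitly the standard fact that $\Cl(\vsW)$ embeds as a subalgebra of $\Cl(\vsV)$ via the Clifford extension of the isometric inclusion $\vsW\hookrightarrow\vsV$, which I would cite rather than reprove.
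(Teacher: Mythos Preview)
Your proof is correct and follows essentially the same approach as the paper: both reduce to products of vectors, apply the vector-level Playfair decomposition \(\vecV_i=\vecW_i+\lambda_i\ezero\), and use \Cref{theorem:commuting_ezero} together with \(\ezero^2=0\) to push all \(\ezero\)'s to the right. The only cosmetic differences are that the paper expands the full product \((\vecW_1+\lambda_1\ezero)\dotsm(\vecW_k+\lambda_k\ezero)\) in one step rather than by induction, and it identifies \(\vecW_1\dotsm\vecW_k=\Cl(\pi_\vsW)(\vecV_1\dotsm\vecV_k)\) directly from multiplicativity of \(\Cl(\pi_\vsW)\), whereas you separate this out cleanly via your properties (a) and (b).
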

\begin{proof}
    It suffices to decompose a product \(\vecV_1\vecV_2\dotsm\vecV_k\) of elements of \(\vsV\). By \Cref{theorem:playfair_decomposition}, we have that \[
        \vecV_1\vecV_2\dotsm\vecV_k=(\vecW_1+\lambda_1\ezero)(\vecW_2+\lambda_2\ezero)\dotsm(\vecV_k+\lambda_k\ezero).
    \] As \(\ezero^2=0\) and using \Cref{theorem:commuting_ezero}, then \[\begin{split}
        \vecV_1\vecV_2\dotsm\vecV_k&=\vecW_1\vecW_2\dotsm\vecW_k+\sum_{i=1}^k\lambda_i\vecW_1\dotsm\vecW_{i-1}\ezero\vecW_{i+1}\dotsm\vecW_k\\
        &=\Cl(\pi_\vsW)(\vecV_1\vecV_2\dotsm\vecV_k)+\paren*{\sum_{i=1}^k\lambda_i\vecW_1\dotsm\vecW_{i-1}\gradeInv(\vecW_{i+1}\dotsm\vecW_k)}\ezero;
    \end{split}\] hence both \(\Cl(\pi_\vsW)(\vecV_1\vecV_2\dotsm\vecV_k)\) and \(\mvecY\coloneqq\sum_{i=1}^k\lambda_i\vecW_1\dotsm\vecW_{i-1}\gradeInv(\vecW_{i+1}\dotsm\vecW_k)\) are in \(\Cl(\vsW)\).
\end{proof}

We refer to the decomposition above as the \definedTerm{Playfair decomposition} of an element of \(\Cl(\vsV)\) at \(\vsW\). Note also that when we have a basis \(B\cup\set\ezero\) for \(\vsV\) with the property that \(B\) is a basis for \(\vsW\), then we can read off the Playfair decomposition of an element \(\mvecX\in\Cl(\vsV)\) by writing \(\mvecX\) in terms of \(B\cup\set\ezero\) and selecting the terms that do or do not contain \(\ezero\); this is common practice among PGA practitioners.

\begin{rexample}The Playfair decomposition of \(\mvecX\) at \(\vsV_\ptP\), or more simply at \(\ptP\) itself, separates \(\mvecX\) into the parts \(\Cl(\pi_\ptP)(\mvecX)\) and \(\mvecY\ezero=\mvecX-\Cl(\pi_\ptP)(\mvecX)\). The former part is incident with \(\ptP\) in the sense that \(\Cl(\pi_\ptP)(\mvecX)\wedge\ptP=0\), so we refer to it as the part of \(\mvecX\) \definedTerm{at \(\ptP\)}. The other part of \(\mvecX\) is at infinity in the sense that it lies within \(\Cl(\vsV)\ezero\), so we call it the part of \(\mvecX\) \definedTerm{at infinity} with respect to \(\ptP\). Note that the parts of \(\mvecX\) at infinity with respect to the points \(\ptP\) and \(\ptQ\) may differ when \(\ptP\neq\ptQ\), while the parts at \(\ptP\) and \(\ptQ\) are in some sense the same; this is due to the fact that \(\pi_\ptP\) and \(\pi_\ptQ\) factor through \(\pi\). The Playfair decomposition is nearly identical to the Euclidean decomposition of \citet{dorst_de_keninck2}, differing only in that the latter replaces multiplication by \(\ezero\) with multiplication by \(\mathcal I=\ezero e_1e_2e_3\)\added{; it also coincides with the decomposition into bulk and weight of \citet[\S2.8.3]{lengyel1}}.\end{rexample}

\begin{corollary}\label{theorem:complement_times_ezero_equals_ideal}For every \(\mvecX\in\Cl(\vsV)\), we have that \(\Cl(\pi_\vsW)(\mvecX)\ezero=\mvecX\ezero\). That is, the sets \(\Cl(\vsV)\ezero\) and \(\Cl(\vsW)\ezero\) are equal.\end{corollary}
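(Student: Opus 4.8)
The plan is to obtain the identity \(\Cl(\pi_\vsW)(\mvecX)\ezero=\mvecX\ezero\) by right-multiplying the Playfair decomposition of \(\mvecX\) by \(\ezero\) and exploiting the fact that \(\ezero\) squares to zero. Concretely, since \(\ezero\in\rad\vsV\) the defining relation of the Clifford algebra gives \(\ezero^2=\sbfB(\ezero,\ezero)=0\) in \(\Cl(\vsV)\). \Cref{theorem:clifford_algebra_playfair_decomposition} supplies \(\mvecY\in\Cl(\vsW)\) with \(\mvecX=\Cl(\pi_\vsW)(\mvecX)+\mvecY\ezero\); multiplying on the right by \(\ezero\) annihilates the second summand, because \(\mvecY\ezero\ezero=\mvecY\ezero^2=0\), and leaves \(\mvecX\ezero=\Cl(\pi_\vsW)(\mvecX)\ezero\), which is the first assertion.

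For the second assertion I would argue the two inclusions separately. The inclusion \(\Cl(\vsW)\ezero\subseteq\Cl(\vsV)\ezero\) is immediate, since \(\Cl(\vsW)\) is (identified with) a subalgebra of \(\Cl(\vsV)\). For the reverse inclusion, take an arbitrary element of \(\Cl(\vsV)\ezero\), say \(\mvecX\ezero\) with \(\mvecX\in\Cl(\vsV)\). By the first assertion \(\mvecX\ezero=\Cl(\pi_\vsW)(\mvecX)\ezero\), and since \(\pi_\vsW\colon\vsV\to\vsW\) is an isometry its Clifford extension \(\Cl(\pi_\vsW)\) maps \(\Cl(\vsV)\) into \(\Cl(\vsW)\), so \(\Cl(\pi_\vsW)(\mvecX)\in\Cl(\vsW)\) and hence \(\mvecX\ezero\in\Cl(\vsW)\ezero\). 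Thus \(\Cl(\vsV)\ezero=\Cl(\vsW)\ezero\).

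I do not expect a genuine obstacle here: once \Cref{theorem:clifford_algebra_playfair_decomposition} is in hand the corollary is essentially a one-line manipulation. The only point that warrants a word of care is the identification of \(\Cl(\vsW)\) with a subalgebra of \(\Cl(\vsV)\) — that is, that the Clifford extension of the inclusion \(\vsW\hookrightarrow\vsV\) is injective — but this is a standard property of Clifford algebras of subspaces and has already been used tacitly in the statement and proof of \Cref{theorem:clifford_algebra_playfair_decomposition}.
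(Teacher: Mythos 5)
Your argument is correct and is essentially the paper's own (implicit) proof: the corollary is obtained exactly by right-multiplying the Playfair decomposition from \Cref{theorem:clifford_algebra_playfair_decomposition} by \(\ezero\) and using \(\ezero^2=0\), with the equality of the sets \(\Cl(\vsV)\ezero\) and \(\Cl(\vsW)\ezero\) following just as you describe since \(\Cl(\pi_\vsW)\) has image in the subalgebra \(\Cl(\vsW)\). Your closing remark about the injectivity of the Clifford extension of the inclusion \(\vsW\hookrightarrow\vsV\) is a fair point of care, and it is indeed tacitly assumed in the paper as well.
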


We can thus define the projection \begin{align*}
    \derD_\vsW:\Cl(\vsV)&\to\Cl(\vsV)\ezero\\
    \mvecX&\mapsto\mvecX-\Cl(\pi_\vsW)(\mvecX),
\end{align*} so that \(\mvecX=\Cl(\pi_\vsW)(\mvecX)+\derD_\vsW(\mvecX)\). The idempotence of \(\derD_\vsW\) follows from the fact that \(\derD_\vsW=\id-\Cl(\pi_\vsW)\), as does the fact that \(\derD_\vsW(\vsV)\subseteq\vsV\), but it is not an algebra homomorphism.

\begin{lemma}\label{theorem:projection_is_derivation}\(\derD_\vsW\) is a Clifford algebra derivation and fixes \(\ezero\).\end{lemma}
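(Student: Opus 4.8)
The plan is to establish the two defining requirements for $\derD_\vsW$ to be a Clifford algebra derivation — the Leibniz rule and the inclusion $\derD_\vsW(\vsV)\subseteq\vsV$ — together with the claim that $\ezero$ is fixed, which I expect to be immediate. Throughout I would abbreviate $p\coloneqq\Cl(\pi_\vsW)$; this is a unital algebra homomorphism by functoriality of $\Cl$, and $\derD_\vsW=\id-p$ is in particular linear.

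First I would dispatch the easy items. Since $\ezero\in\F\ezero=\ker\pi$ we have $\pi(\ezero)=0$, hence $\pi_\vsW(\ezero)=\omega_\vsW(0)=0$, so $p(\ezero)=0$ and $\derD_\vsW(\ezero)=\ezero-0=\ezero$. For the vector-space condition, given $\vecV\in\vsV$ write $\vecV=\vecW+\lambda\ezero$ with $\vecW\in\vsW$, $\lambda\in\F$ by \Cref{theorem:playfair_decomposition}; then $\derD_\vsW(\vecV)=\vecV-\pi_\vsW(\vecV)=\vecV-\vecW=\lambda\ezero\in\vsV$, which is the inclusion already recorded above.

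The real content is the Leibniz rule. Using $\derD_\vsW=\id-p$ and multiplicativity of $p$, a short expansion gives, for all $\mvecX,\mvecY\in\Cl(\vsV)$,
\[
    \mvecX\derD_\vsW(\mvecY)+\derD_\vsW(\mvecX)\mvecY-\derD_\vsW(\mvecX\mvecY)=\paren*{\mvecX-p(\mvecX)}\paren*{\mvecY-p(\mvecY)}=\derD_\vsW(\mvecX)\derD_\vsW(\mvecY),
\]
so it suffices to show the right-hand side vanishes. Here I would use that $\derD_\vsW$ takes values in $\Cl(\vsV)\ezero$ (its declared codomain, via \Cref{theorem:complement_times_ezero_equals_ideal}): writing $\derD_\vsW(\mvecX)=\mvecA\ezero$ and $\derD_\vsW(\mvecY)=\mvecB\ezero$ and applying \Cref{theorem:commuting_ezero} together with $\ezero^2=0$,
\[
    \derD_\vsW(\mvecX)\derD_\vsW(\mvecY)=\mvecA\ezero\mvecB\ezero=\mvecA\gradeInv(\mvecB)\ezero^2=0.
\]

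I do not anticipate a genuine obstacle: once one observes that $\derD_\vsW$ maps into $\Cl(\vsV)\ezero$, and that the product of any two elements of $\Cl(\vsV)\ezero$ vanishes (by \Cref{theorem:commuting_ezero} and $\ezero^2=0$), the whole argument reduces to that single relation. The one step needing a little care is the first display, which pins the failure of $\id-p$ to be a derivation to exactly the product $\derD_\vsW(\mvecX)\derD_\vsW(\mvecY)$ — precisely what the vanishing above annihilates.
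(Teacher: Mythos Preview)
Your proof is correct, but it proceeds quite differently from the paper's. The paper verifies the Leibniz rule by hand on products of vectors: writing each $\vecV_i=\vecW_i+\lambda_i\ezero$, it expands $\derD_\vsW(\vecV_0\dotsm\vecV_k)$ explicitly as a sum over $i$, peels off the $i=0$ term, and recognises the result as $\vecV_0\derD_\vsW(\vecV_1\dotsm\vecV_k)+\derD_\vsW(\vecV_0)\vecV_1\dotsm\vecV_k$; linearity and the fact that $\Cl(\vsV)$ is spanned by such products then finish it. Your route is more conceptual: you use only that $p=\Cl(\pi_\vsW)$ is multiplicative to obtain the identity
\[
\mvecX\derD_\vsW(\mvecY)+\derD_\vsW(\mvecX)\mvecY-\derD_\vsW(\mvecX\mvecY)=\derD_\vsW(\mvecX)\derD_\vsW(\mvecY),
\]
and then kill the right-hand side with the square-zero property of $\Cl(\vsV)\ezero$ (via \Cref{theorem:commuting_ezero} and $\ezero^2=0$). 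This is shorter and isolates the essential mechanism --- indeed, the paper itself uses exactly this ``$\derD(\mvecX)\derD(\mvecY)=0$'' observation later, in the proof of \Cref{theorem:derivation_is_projection}. The paper's computation, on the other hand, yields an explicit formula for $\derD_\vsW$ on simple products, which is occasionally useful in its own right.
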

\begin{proof}
    First, \(\Cl(\pi_\vsW)(\ezero)=(\omega_\vsW\circ\pi)(\ezero)=0\), so \(\derD_\vsW(\ezero)=\ezero-\Cl(\pi_\vsW)(\ezero)=\ezero\). Now let \(k\) be a nonnegative integer and \(\vecV_0,\dotsc\vecV_k\in\vsV\). For each \(0\leq i\leq k\), by \Cref{theorem:playfair_decomposition}, there exists \(\lambda_i\in\F\) such that \(\derD_\vsW(\vecV_i)=\vecV_i-\pi_\vsW(\vecV_i)=\lambda_i\ezero\). Thus we have that \begin{align*}
        \derD_\vsW(\vecV_0\dotsm\vecV_k)&=\vecV_0\dotsm\vecV_k-\Cl(\pi_\vsW)(\vecV_0\dotsm\vecV_k)\\
        &=\vecV_0\dotsm\vecV_k-(\vecV_0-\lambda_0\ezero)\dotsm(\vecV_k-\lambda_k\ezero)\\
        &=\sum_{i=0}^k\lambda_i\vecV_0\dotsm\vecV_{i-1}\gradeInv(\vecV_{i+1}\dotsm\vecV_k)\ezero,
    \end{align*} and hence that \begin{align*}
        \derD_\vsW(\vecV_0\dotsm\vecV_k)&=\sum_{i=0}^k\lambda_i\vecV_0\dotsm\vecV_{i-1}\gradeInv(\vecV_{i+1}\dotsm\vecV_k)\ezero\\
        &=\lambda_0\gradeInv(\vecV_1\dotsm\vecV_k)\ezero+\vecV_0\sum_{i=1}^k\lambda_i\vecV_1\dotsm\vecV_{i-1}\gradeInv(\vecV_{i+1}\dotsm\vecV_k)\ezero\\
        &=\vecV_0\paren*{\sum_{i=1}^k\lambda_i\vecV_1\dotsm\vecV_{i-1}\gradeInv(\vecV_{i+1}\dotsm\vecV_k)\ezero}+(\lambda_0\ezero)\vecV_1\dotsm\vecV_k\\
        &=\vecV_0\derD_\vsW(\vecV_1\dotsm\vecV_k)+\derD_\vsW(\vecV_0)\vecV_1\dotsm\vecV_k,
    \end{align*} so \(\derD_\vsW\) is a derivation on finite products of elements of \(\vsV\). As \(\derD_\vsW\) is trivially linear and since \(\Cl(\vsV)\) is finitely spanned by finite products of elements of \(\vsV\), we have that \(\derD_\vsW\) is a derivation on all of \(\Cl(\vsV)\). Finally, as \(\derD_\vsW(\vecV)\in\F\ezero\subseteq\vsV\) for all \(\vecV\in\vsV\), we have that \(\derD_\vsW\) is a Clifford algebra derivation.
\end{proof}

\begin{lemma}\label{theorem:quotient_algebra_isomorphic_algebra_of_quotient}The quotient algebra \(\Cl(\vsV)/\Cl(\vsV)\ezero\) is canonically isomorphic to the Clifford algebra of the quotient space \(\Cl(\vsV/\F\ezero)\).\end{lemma}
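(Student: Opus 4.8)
The plan is to show that the Clifford extension \(\Cl(\pi)\colon\Cl(\vsV)\to\Cl(\vsV/\F\ezero)\) of the canonical projection \(\pi\) descends to the desired isomorphism. First I would record the routine facts. The map \(\Cl(\pi)\) is surjective, since \(\Cl(\vsV/\F\ezero)\) is generated as an algebra by \(\vsV/\F\ezero=\pi(\vsV)\). The subspace \(\Cl(\vsV)\ezero\) is a two-sided ideal: it is visibly a left ideal, and it is a right ideal because \Cref{theorem:commuting_ezero} gives \(\mvecX\ezero\mvecY=\mvecX\gradeInv(\mvecY)\ezero\) for all \(\mvecX,\mvecY\in\Cl(\vsV)\) (this is also what makes the quotient algebra in the statement legitimate). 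And \(\Cl(\vsV)\ezero\subseteq\ker\Cl(\pi)\), because \(\Cl(\pi)(\mvecX\ezero)=\Cl(\pi)(\mvecX)\,\pi(\ezero)=0\) as \(\pi(\ezero)=0\) in \(\vsV/\F\ezero\). Hence \(\Cl(\pi)\) factors through a surjective algebra homomorphism \(\overline{\Cl(\pi)}\colon\Cl(\vsV)/\Cl(\vsV)\ezero\to\Cl(\vsV/\F\ezero)\), and the whole problem reduces to showing \(\overline{\Cl(\pi)}\) is injective, i.e.\ that \(\ker\Cl(\pi)=\Cl(\vsV)\ezero\).

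I would do this by exhibiting an explicit inverse. Take the canonical section \(\omega_\vsW\colon\vsV/\F\ezero\to\vsV\) of \(\pi\) afforded by \Cref{theorem:orthogonal_complements} (so \(\pi\circ\omega_\vsW=\id\) and \(\omega_\vsW\circ\pi=\pi_\vsW\)), and let \(\psi\) be the composite algebra homomorphism \(\Cl(\vsV/\F\ezero)\xrightarrow{\Cl(\omega_\vsW)}\Cl(\vsV)\twoheadrightarrow\Cl(\vsV)/\Cl(\vsV)\ezero\). Since \(\overline{\Cl(\pi)}\circ\psi\) and \(\psi\circ\overline{\Cl(\pi)}\) are algebra homomorphisms, it is enough to check that each agrees with the identity on a generating set. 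For \(\eqc\vecV\in\vsV/\F\ezero\) one computes \(\overline{\Cl(\pi)}(\psi(\eqc\vecV))=\pi(\omega_\vsW(\eqc\vecV))=\eqc\vecV\); for \(\vecV+\Cl(\vsV)\ezero\) with \(\vecV\in\vsV\) one gets \(\psi(\overline{\Cl(\pi)}(\vecV+\Cl(\vsV)\ezero))=\pi_\vsW(\vecV)+\Cl(\vsV)\ezero\), which equals \(\vecV+\Cl(\vsV)\ezero\) because \(\vecV-\pi_\vsW(\vecV)\in\F\ezero\subseteq\Cl(\vsV)\ezero\) by \Cref{theorem:playfair_decomposition}. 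Hence \(\psi\) is a two-sided inverse of \(\overline{\Cl(\pi)}\), which is therefore the desired isomorphism.

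There is no serious obstacle here, but two points deserve a sentence. The construction of \(\psi\) uses the choice of orthogonal complement \(\vsW\), whereas the statement promises a \emph{canonical} isomorphism; this is no contradiction, because the isomorphism produced is \(\overline{\Cl(\pi)}\) itself, which is manifestly independent of \(\vsW\), and \(\psi\) is merely its forced inverse, so the apparent \(\vsW\)-dependence cancels. Secondly, for readers who prefer not to build an inverse, the same conclusion follows in the finite-dimensional case from a dimension count: by \Cref{theorem:complement_times_ezero_equals_ideal} together with the exterior-algebra description of left multiplication by the degenerate vector \(\ezero\), one has \(\dim\Cl(\vsV)\ezero=2^{\dim\vsV-1}=\dim\Cl(\vsV/\F\ezero)\), which combined with \(\Cl(\vsV)\ezero\subseteq\ker\Cl(\pi)\) and the surjectivity of \(\Cl(\pi)\) forces \(\ker\Cl(\pi)=\Cl(\vsV)\ezero\). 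I would present the inverse-construction argument as the main proof, since it is coordinate-free and needs no finiteness hypothesis.
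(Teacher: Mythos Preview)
Your argument is correct and follows the same essential strategy as the paper's: both identify \(\Cl(\pi)\) as the canonical map and use the section \(\omega_\vsW\) to pin down its kernel. The only difference is packaging---the paper shows \(\ker\Cl(\pi)\subseteq\Cl(\vsV)\ezero\) directly via the Clifford-level Playfair decomposition (\Cref{theorem:clifford_algebra_playfair_decomposition}) and then invokes the First Isomorphism Theorem, whereas you build the inverse \(\psi\) explicitly and verify it on vector generators, needing only the vector-level decomposition \Cref{theorem:playfair_decomposition}.
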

\begin{proof}
    For any \(\mvecX\ezero\in\Cl(\vsV)\ezero\), we see that \(\Cl(\pi)(\mvecX\ezero)=\Cl(\pi)(\mvecX)\pi(\ezero)=0\), so \(\Cl(\vsV)\ezero\subseteq\ker\Cl(\pi)\). By \Cref{theorem:clifford_algebra_playfair_decomposition}, for any \(\mvecX\in\ker\Cl(\pi)\) there exists \(\mvecY\in\Cl(\vsW)\) such that \(0=\Cl(\omega_\vsW\circ\pi)(\mvecX)=\Cl(\pi_\vsW)(\mvecX)=\mvecX-\mvecY\ezero\), so \(\mvecX=\mvecY\ezero\in\Cl(\vsV)\ezero\) and thus \(\ker\Cl(\pi)=\Cl(\vsV)\ezero\). As \(\pi\) is a surjection, so is \(\Cl(\pi)\), hence \(\im\Cl(\pi)=\Cl(\vsV/\F\ezero)\). Therefore \(\Cl(\pi)\) is an isomorphism from \(\Cl(\vsV)/\Cl(\vsV)\ezero\) to \(\Cl(\vsV/\F\ezero)\) by the First Isomorphism Theorem.
\end{proof}

As we have a canonical isomorphism between the quotient algebra of the Clifford algebra \(\Cl(\vsV)/\Cl(\vsV)\ezero\) and the Clifford algebra of the quotient space \(\Cl(\vsV/\F\ezero)\), we shall abuse notation by identifying the two algebras, as well as identifying their elements.

\begin{corollary}\label{theorem:proper_nontrivial_twosided_ideal}The ideal \(\Cl(\vsV)\ezero\) is proper, nontrivial, and two-sided.\end{corollary}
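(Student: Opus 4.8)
The plan is to dispatch the three assertions in turn, each by appeal to a result already in hand. That \(\Cl(\vsV)\ezero\) is a \emph{left} ideal is immediate from its very definition as the set of left multiples of \(\ezero\); to see it is also a right ideal, and hence two-sided, take \(\mvecX\ezero\in\Cl(\vsV)\ezero\) and an arbitrary \(\mvecY\in\Cl(\vsV)\) and apply \Cref{theorem:commuting_ezero}: \((\mvecX\ezero)\mvecY=\mvecX(\ezero\mvecY)=\mvecX\gradeInv(\mvecY)\ezero\), which again lies in \(\Cl(\vsV)\ezero\).

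For \emph{nontriviality} it is enough to produce a single nonzero element of the ideal, and \(\ezero=1\cdot\ezero\) is the obvious candidate. The one thing to check is that \(\ezero\neq0\) as an element of \(\Cl(\vsV)\): this holds because the canonical map \(\vsV\to\Cl(\vsV)\) is injective — it is the identification of \(\vsV\) with the grade-\(1\) part \(\Cl^1(\vsV)=\theta^{-1}(\Ex^1(\vsV))\) coming from Chevalley's linear isomorphism \(\theta\) recalled above — so \(\ezero\neq0\) in \(\vsV\) (as \(\F\ezero\) is one-dimensional) forces \(\ezero\neq0\) in \(\Cl(\vsV)\).

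For \emph{properness} I would invoke \Cref{theorem:quotient_algebra_isomorphic_algebra_of_quotient}: that lemma identifies \(\Cl(\vsV)/\Cl(\vsV)\ezero\) with the Clifford algebra \(\Cl(\vsV/\F\ezero)\), which is a nonzero unital algebra, so the ideal cannot be all of \(\Cl(\vsV)\). Concretely, \(\Cl(\vsV)\ezero=\ker\Cl(\pi)\) and \(\Cl(\pi)\) is a unital homomorphism, so \(\Cl(\pi)(1)=1\neq0\) shows \(1\notin\Cl(\vsV)\ezero\). None of this presents a genuine obstacle; the only subtlety worth flagging is the non-vanishing of \(\ezero\) in \(\Cl(\vsV)\) for a possibly infinite-dimensional \(\vsV\), which is exactly the content of the injectivity of the Clifford embedding and, equivalently, the non-triviality of \(\Cl(\vsV/\F\ezero)\).
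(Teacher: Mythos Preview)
Your proof is correct and tracks the paper's intended argument: the corollary is stated without proof immediately after \Cref{theorem:quotient_algebra_isomorphic_algebra_of_quotient}, whose proof already establishes \(\Cl(\vsV)\ezero=\ker\Cl(\pi)\), from which two-sidedness and properness follow at once (since \(\Cl(\pi)\) is unital onto a nonzero algebra), with nontriviality from \(\ezero\neq0\). Your use of \Cref{theorem:commuting_ezero} for two-sidedness is a harmless variant of simply citing that kernels are two-sided.
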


The element \(\ezero\) (or any other nonzero degenerate element of \(\vsV\)) is remarkable in satisfying \Cref{theorem:proper_nontrivial_twosided_ideal}. Indeed, consider a nonnull element \(\vecV\in\vsV\); as \(\vecV\) is invertible in \(\Cl(\vsV)\), the ideal \(\Cl(\vsV)\vecV\) equals \(\Cl(\vsV)\). The zero element \(0\in\vsV\), on the other hand, generates the zero (trivial) ideal. A nondegenerate null element generates proper nontrivial left and right ideals, but the two are not equal.

\begin{lemma}\label{theorem:derivation_is_projection}Let \(\derD:\Cl(\vsV)\to\Cl(\vsV)\ezero\) be a Clifford algebra derivation such that \(\derD(\ezero)=\ezero\). Then \(\derD=\derD_\vsW\) for some complement \(\vsW\) of \(\F\ezero\) in \(\vsV\).\end{lemma}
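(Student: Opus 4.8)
The plan is to reduce everything to a statement about linear maps on \(\vsV\), using the principle that a Clifford algebra derivation is determined by its restriction to \(\vsV\), and then to recognise \(\id_\vsV-\derD\) as a Playfair projection.

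First I would note that, being a Clifford algebra derivation, \(\derD\) maps \(\vsV\) into \(\vsV\); combined with \(\derD(\Cl(\vsV))\subseteq\Cl(\vsV)\ezero\) this gives \(\derD(\vsV)\subseteq\vsV\cap\Cl(\vsV)\ezero\). I claim this intersection is exactly \(\F\ezero\). Indeed, by \Cref{theorem:quotient_algebra_isomorphic_algebra_of_quotient} we have \(\Cl(\vsV)\ezero=\ker\Cl(\pi)\), and since \(\Cl(\pi)\) restricts on \(\vsV\) to \(\pi\), whose kernel is \(\F\ezero\), we get \(\vsV\cap\Cl(\vsV)\ezero=\F\ezero\). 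Hence \(\derD|_\vsV\colon\vsV\to\F\ezero\) is linear with \(\derD(\ezero)=\ezero\); writing \(\derD(\vecV)=\mu\ezero\) for the appropriate scalar \(\mu\) shows \(\derD^2(\vecV)=\mu\derD(\ezero)=\mu\ezero=\derD(\vecV)\), so \(\derD|_\vsV\) is idempotent.

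Now set \(\phi\coloneqq\id_\vsV-\derD|_\vsV\). Idempotence of \(\derD|_\vsV\) together with \(\derD(\ezero)=\ezero\) makes \(\phi\) an idempotent linear map with \(\ker\phi=\F\ezero\), so \(\vsW\coloneqq\im\phi\) is a complement of \(\F\ezero\) in \(\vsV\), and by \Cref{theorem:orthogonal_complements} it is an orthogonal complement; thus \(\derD_\vsW\) and the Playfair projection \(\pi_\vsW=\omega_\vsW\circ\pi\) are defined. I would then check \(\phi=\pi_\vsW\) by evaluating both on the summands of \(\vsV=\vsW\oplus\F\ezero\): both are idempotent with image \(\vsW\), hence restrict to \(\id_\vsW\); and both annihilate \(\F\ezero\), since \(\omega_\vsW\circ\pi\) does and \(\ker\phi=\F\ezero\). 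Therefore \(\derD|_\vsV=\id_\vsV-\phi=\id_\vsV-\pi_\vsW=\derD_\vsW|_\vsV\).

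Finally I would invoke determinacy: \(\Cl(\vsV)\) is generated as an algebra by \(\vsV\), every derivation kills \(1\), and the Leibniz rule forces a derivation's values on products of elements of \(\vsV\), so two Clifford algebra derivations that agree on \(\vsV\) coincide. Since \(\derD_\vsW\) is a Clifford algebra derivation (\Cref{theorem:projection_is_derivation}) and agrees with \(\derD\) on \(\vsV\), we conclude \(\derD=\derD_\vsW\). The main obstacle is the intermediate identification \(\vsV\cap\Cl(\vsV)\ezero=\F\ezero\); the one real idea is to guess that \(\id_\vsV-\derD\) is the projection we want, after which it is just bookkeeping with idempotents.
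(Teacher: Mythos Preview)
Your proof is correct, and it takes a genuinely different route from the paper's. The paper works at the level of the whole Clifford algebra: it defines \(\Omega_\derD:\Cl(\vsV/\F\ezero)\to\Cl(\vsV)\) by \(\eqc\mvecX\mapsto\mvecX-\derD(\mvecX)\), checks well-definedness on the quotient and multiplicativity (using \(\derD(\mvecX)\derD(\mvecY)=0\)), observes that \(\Omega_\derD\) is a Clifford algebra homomorphism, and then restricts to \(\vsV/\F\ezero\) to obtain a section \(\omega_\derD\) of \(\pi\) whose image is the desired \(\vsW\). You instead restrict to \(\vsV\) at the outset, identify \(\vsW\) by elementary linear algebra on idempotents, and then appeal to the general principle that a derivation on an algebra is determined by its values on a generating set. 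Your approach is shorter and avoids the algebra-level bookkeeping; the paper's approach is more explicitly constructive, exhibiting \(\Cl(\omega_\vsW)\) directly as \(\id-\derD\) on the quotient, which fits the paper's broader theme of tracking how structure passes through the functor \(\Cl\).
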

\begin{proof}
    We claim that the map \begin{align*}
        \Omega_\derD:\Cl(\vsV/\F\ezero)&\to\Cl(\vsV)\\
        \eqc\mvecX&\mapsto\mvecX-\derD(\mvecX);
    \end{align*} is well-defined. Indeed, for any \(\mvecX,\mvecY\in\Cl(\vsV)\), if \(\eqc\mvecX=\eqc\mvecY\) then there exists \(\mvecA\in\Cl(\vsV)\) such that \(\mvecX-\mvecY=\mvecA\ezero\), and \(\mvecB\in\Cl(\vsV)\) such that \(\derD(\mvecA)=\mvecB\ezero\). Then \begin{align*}
        \Omega_\derD(\eqc\mvecX)-\Omega_\derD(\eqc\mvecY)&=(\mvecX-\derD(\mvecX))-(\mvecY-\derD(\mvecY))\\
        &=\mvecA\ezero-\derD(\mvecA\ezero)\\
        &=\mvecA\ezero-\mvecA\derD(\ezero)-\derD(\mvecA)\ezero\\
        &=\mvecA\ezero-\mvecA\ezero-\mvecB\ezero^2\\
        &=0,
    \end{align*} proving the claim.

    Now note that for any \(\mvecX,\mvecY\in\Cl(\vsV)\), we have that \(\derD(\mvecX)=\mvecA\ezero\) and \(\derD(\mvecY)=\mvecB\ezero\) for some \(\mvecA,\mvecB\in\Cl(\vsV)\); by \Cref{theorem:commuting_ezero} we then have \begin{equation}\label{equation:derivation_squared_zero}
        \derD(\mvecX)\derD(\mvecY)=(\mvecA\ezero)(\mvecB\ezero)=\mvecA\gradeInv(\mvecB)\ezero^2=0.
    \end{equation} Let \(k\) be a nonnegative integer and \(\vecV_0,\dotsc,\vecV_k\in\vsV\). Then \begin{align*}
        \Omega_\derD(\eqc{\vecV_0\dotsm\vecV_k})={}&\vecV_0\dotsm\vecV_k-\vecV_0\derD(\vecV_1\dotsm\vecV_k)\\
        &-\derD(\vecV_0)\vecV_1\dotsm\vecV_k+\derD(\vecV_0)\derD(\vecV_1\dotsm\vecV_k)\\
        ={}&(\vecV_0-\derD(\vecV_0))(\vecV_1\dotsm\vecV_k-\derD(\vecV_1\dotsm\vecV_k))\\
        ={}&\Omega_\derD(\eqc{\vecV_0})\Omega_\derD(\eqc{\vecV_1\dotsm\vecV_k}),
    \end{align*} where we used \Cref{equation:derivation_squared_zero} to add \(\derD(\vecV_0)\derD(\vecV_1\dotsm\vecV_k)=0\). Hence, as the algebra \(\Cl(\vsV/\F\ezero)\) is finitely spanned by finite products of elements of \(\vsV/\F\ezero\), we have that \(\Omega_\derD\) is a unital algebra homomorphism; as \(\derD\) is a Clifford algebra derivation, \(\Omega_\derD=\id-\derD\) also fixes \(\vsV\) setwise and so is a Clifford algebra homomorphism. Thus \(\Omega_\derD\) restricts to an isometry \(\omega_\derD\coloneqq\Omega_\derD|_{\vsV/\F\ezero}:\vsV/\F\ezero\to\vsV\). Composing with \(\pi\) we get, for any \(\eqc\vecV\in\vsV/\F\ezero\), \begin{align*}
        (\pi\circ\omega_\derD)(\eqc\vecV)=\pi(\vecV-\derD(\vecV))=\eqc\vecV,
    \end{align*} noting that \(\derD(\vecV)\in\vsV\cap\Cl(\vsV)\ezero=\F\ezero\). Hence \(\omega_\derD\) is a section of \(\pi\) and so \(\vsW\coloneqq\omega_\derD(\vsV)\) is a complement of \(\F\ezero\) in \(\vsV\); by \Cref{theorem:orthogonal_complements} then \(\omega_\derD=\omega_\vsW\). Finally then, for any \(\mvecX\in\Cl(\vsV)\), \begin{align*}
        \derD_\vsW(\mvecX)-\derD(\mvecX)&=(\mvecX-\Cl(\pi_\vsW)(\mvecX))-(\mvecX-\Omega_\derD(\eqc\mvecX))\\
        &=\Omega_\derD(\eqc\mvecX)-\Cl(\omega_\vsW)(\eqc\mvecX)\\
        &=\Cl(\omega_\derD)(\eqc\mvecX)-\Cl(\omega_\vsW)(\eqc\mvecX)\\
        &=0,
    \end{align*} so \(\derD=\derD_\vsW\).
\end{proof}

\begin{theorem}\label{theorem:complements_and_derivations}The complements of \(\F\ezero\) in \(\vsV\) are in one-to-one correspondence with the Clifford algebra derivations from \(\Cl(\vsV)\) to \(\Cl(\vsV)\ezero\) fixing \(\ezero\).\end{theorem}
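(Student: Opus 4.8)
The plan is to show that the assignment $\vsW\mapsto\derD_\vsW$ is the bijection we want; since \Cref{theorem:projection_is_derivation} and \Cref{theorem:derivation_is_projection} have already done the substantive work, what remains is mostly bookkeeping together with one short injectivity argument.

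First I would check that $\vsW\mapsto\derD_\vsW$ is a well-defined map from the set of complements of $\F\ezero$ in $\vsV$ into the set of Clifford algebra derivations $\Cl(\vsV)\to\Cl(\vsV)\ezero$ fixing $\ezero$. By construction $\derD_\vsW=\id-\Cl(\pi_\vsW)$ has image contained in $\Cl(\vsV)\ezero$ (\Cref{theorem:complement_times_ezero_equals_ideal}), and \Cref{theorem:projection_is_derivation} says precisely that $\derD_\vsW$ is a Clifford algebra derivation fixing $\ezero$. Surjectivity of the map is then exactly the content of \Cref{theorem:derivation_is_projection}: every Clifford algebra derivation $\derD:\Cl(\vsV)\to\Cl(\vsV)\ezero$ with $\derD(\ezero)=\ezero$ is $\derD_\vsW$ for some complement $\vsW$ of $\F\ezero$.

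It remains to establish injectivity, which I would do by recovering $\vsW$ from $\derD_\vsW$. For $\vecW\in\vsW$ we have $\pi_\vsW(\vecW)=\vecW$ — the map $\omega_\vsW$ is a genuine section of $\pi$ by \Cref{theorem:orthogonal_complements}, so $\pi_\vsW=\omega_\vsW\circ\pi$ restricts to the identity on $\vsW$ — hence $\derD_\vsW(\vecW)=\vecW-\Cl(\pi_\vsW)(\vecW)=0$. Conversely, if $\vecV\in\vsV$ and $\derD_\vsW(\vecV)=0$ then $\vecV=\Cl(\pi_\vsW)(\vecV)=\pi_\vsW(\vecV)\in\vsW$. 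Therefore $\vsW=\vsV\cap\ker\derD_\vsW$, so $\derD_\vsW$ determines $\vsW$, the map $\vsW\mapsto\derD_\vsW$ is injective, and hence bijective.

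I do not expect any real obstacle here: the two preceding lemmas bear the weight, and the only new ingredient is the observation that a complement of $\F\ezero$ is the intersection of $\vsV$ with the kernel of its associated derivation. The only points to phrase carefully are that $\derD_\vsW$ genuinely lands in the ideal $\Cl(\vsV)\ezero$ and that $\pi_\vsW$ acts as the identity on $\vsW$, both of which are already available from \Cref{theorem:complement_times_ezero_equals_ideal} and \Cref{theorem:orthogonal_complements}.
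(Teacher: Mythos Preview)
Your proposal is correct and follows the same approach as the paper, which simply cites \Cref{theorem:projection_is_derivation} and \Cref{theorem:derivation_is_projection}. Your explicit injectivity argument via \(\vsW=\vsV\cap\ker\derD_\vsW\) is a welcome addition that the paper leaves implicit (there, injectivity is buried in the fact that the construction of \Cref{theorem:derivation_is_projection} recovers \(\omega_\vsW\), and hence \(\vsW\), from \(\derD_\vsW\)).
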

\begin{proof}
    Follows from \Cref{theorem:projection_is_derivation,theorem:derivation_is_projection}.
\end{proof}

\begin{rexample}
    The complements \(\vsV_\ptP\) of \(\R\ezero\) are in one-to-one correspondence with points \(\ptP\) in space. \Cref{theorem:complements_and_derivations} gives us another correspondence, with a natural class of derivations.
\end{rexample}

\begin{lemma}\label{theorem:right_multiplication_isomorphism}Let \(\lambda\in\F^\times\). Then right-multiplication by \(\lambda\ezero\) is a linear isomorphism from \(\Cl(\vsW)\) to the ideal \(\Cl(\vsV)\ezero\).\end{lemma}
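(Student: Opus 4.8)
The plan is to split the statement into three parts: $\F$-linearity (formal), surjectivity onto $\Cl(\vsV)\ezero$ (a quick consequence of \Cref{theorem:complement_times_ezero_equals_ideal}), and injectivity (where the real work lies). Write $R:\Cl(\vsW)\to\Cl(\vsV)$ for the map $\mvecY\mapsto\mvecY(\lambda\ezero)=\lambda\,\mvecY\ezero$, the product being taken in $\Cl(\vsV)$; linearity is immediate since the Clifford product is $\F$-bilinear. For the image, since $\lambda\in\F^\times$ the element $\lambda\mvecY$ ranges over all of $\Cl(\vsW)$ as $\mvecY$ does, so $\im R=\Cl(\vsW)\ezero$, which by \Cref{theorem:complement_times_ezero_equals_ideal} is exactly the two-sided ideal $\Cl(\vsV)\ezero$ (cf.\ \Cref{theorem:proper_nontrivial_twosided_ideal}); thus $R$ is a well-defined surjection onto $\Cl(\vsV)\ezero$. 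It then remains only to show that $\mvecY\in\Cl(\vsW)$ with $\mvecY\ezero=0$ forces $\mvecY=0$.

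To get injectivity I would transport the problem through the Chevalley linear isomorphism $\theta:\Cl(\vsV)\to\Ex(\vsV)$ recalled above, via two ingredients. First, $\theta$ restricts to the analogous isomorphism $\Cl(\vsW)\to\Ex(\vsW)$; this is naturality of the symbol map with respect to the inclusion isometry $\vsW\hookrightarrow\vsV$, and is transparent from the description of $\theta^{-1}$ as the antisymmetrisation of products of vectors, which carries a product of vectors of $\vsW$ into $\Ex(\vsW)$. Second, and this is the crux, because $\ezero$ lies in the radical, right Clifford multiplication by $\ezero$ corresponds under $\theta$ to right exterior multiplication by $\ezero$: in the standard identity expressing $\theta(\mvecX\ezero)$ as $\theta(\mvecX)\wedge\ezero$ plus an interior-multiplication term, that interior term is assembled solely from the values $\sbfB(\ezero,\cdot)$, all of which vanish, so that $\theta(\mvecX\ezero)=\theta(\mvecX)\wedge\ezero$ for every $\mvecX\in\Cl(\vsV)$. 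Identifying $\Cl(\vsW)$ with $\Ex(\vsW)$ and $\Cl(\vsV)$ with $\Ex(\vsV)$ through $\theta$, the map $R$ thus becomes $\omega\mapsto\lambda\,\omega\wedge\ezero$ from $\Ex(\vsW)$ into $\Ex(\vsV)$.

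Finally, $\ezero\notin\vsW$ (indeed $\vsW\cap\F\ezero=0$, since $\vsW$ is a complement of $\F\ezero$), and wedging by a vector outside $\vsW$ is injective on $\Ex(\vsW)$: adjoining $\ezero$ to a basis of $\vsW$, the wedge monomials in that basis map to distinct elements of the monomial basis of $\Ex(\vsV)$. So $\omega\mapsto\lambda\,\omega\wedge\ezero$ is injective, hence $R$ is injective, and together with surjectivity this makes $R$ the asserted linear isomorphism. (In finite dimensions one can skip this last step: \Cref{theorem:clifford_algebra_playfair_decomposition} and \Cref{theorem:quotient_algebra_isomorphic_algebra_of_quotient} give the internal vector-space direct sum $\Cl(\vsV)=\Cl(\vsW)\oplus\Cl(\vsV)\ezero$, and since $\dim\Cl(\vsV)=2\dim\Cl(\vsW)$ the surjection $R$ maps between spaces of equal finite dimension and so is bijective.)

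I expect the one delicate step to be the second ingredient above: stating the Clifford-to-exterior multiplication identity precisely and checking that its interior-product term is killed by the degeneracy of $\ezero$. Once that is secured, everything reduces to elementary facts about exterior algebras. A reader preferring to avoid $\theta$ can instead work with the monomial basis of $\Cl(\vsV)$ attached to a basis of $\vsV$ \citep{chevalley1}: ordering that basis with $\ezero$ last, right multiplication by $\ezero$ sends the monomial basis of $\Cl(\vsW)$ bijectively onto the monomial basis elements of $\Cl(\vsV)$ that contain $\ezero$, which span $\Cl(\vsV)\ezero$.
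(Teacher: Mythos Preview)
Your proof is correct and follows essentially the same route as the paper's: the paper also reduces injectivity to the identity \(\mvecX\ezero=\mvecX\wedge\ezero\) for \(\mvecX\in\Cl(\vsW)\) and then uses that \(\ezero\) is linearly independent of every vector in \(\vsW\). Your write-up is more careful in spelling out why the interior-product correction vanishes and why wedging by \(\ezero\) is injective on \(\Ex(\vsW)\), points the paper leaves implicit.
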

\begin{proof}
    Let \(\rho\) denote right-multiplication by \(\lambda\ezero\); i.e. \(\rho:\mvecX\mapsto\lambda\mvecX\ezero\). Immediately, \(\rho\) is linear by the bilinearity of the Clifford algebra product. Now let \(\mvecX\in\Cl(\vsW)\cap\ker\rho\). Then \(\lambda\mvecX\wedge\ezero=\lambda\mvecX\ezero=0\), and as \(\vecW\) and \(\ezero\) are linearly independent for every \(\vecW\in\vsW\), it must be that \(\mvecX=0\); hence \(\rho\) is injective on \(\Cl(\vsW)\). Finally, \(\rho\) is surjective on \(\Cl(\vsW)\) by \Cref{theorem:complement_times_ezero_equals_ideal} and thus is a linear isomorphism.
\end{proof}

\begin{theorem}\label{theorem:fundamental_theorem_of_aga}The following Clifford algebras are canonically isomorphic:\begin{enumerate}
    \item \(\Cl(\vsV)/\Cl(\vsV)\ezero\), the quotient algebra,
    \item \(\Cl(\vsV/\F\ezero)\), the Clifford algebra of the quotient space, and
    \item \(\Cl(\vsW)\), the Clifford algebra of any complement of \(\F\ezero\) in \(\vsV\).
\end{enumerate}

Moreover, each \(\vecU\in\F\ezero\setminus\set0\) gives a distinct linear isomorphism \(\mvecX\mapsto\mvecX\vecU\) from the above vector spaces to the ideal \(\Cl(\vsV)\ezero=\Cl(\vsW)\ezero\).\end{theorem}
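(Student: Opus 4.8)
The plan is to assemble the three claimed isomorphisms from results already established and then obtain the ``moreover'' clause by transporting the right-multiplication isomorphism of \Cref{theorem:right_multiplication_isomorphism} through them. The canonical isomorphism between (1) and (2) is precisely \Cref{theorem:quotient_algebra_isomorphic_algebra_of_quotient}, induced by the Clifford extension \(\Cl(\pi)\) of the canonical projection, so that part needs no further work.

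For (2) \(\cong\) (3) I would use functoriality of \(\Cl\). By \Cref{theorem:orthogonal_complements}, the canonical section \(\omega_\vsW\) of \(\pi\) is an isometry \(\vsV/\F\ezero\to\vsV\) whose image is exactly \(\vsW\) and whose inverse on that image is \(\pi|_\vsW\); thus \(\omega_\vsW\) corestricts to a bijective isometry \(\vsV/\F\ezero\to\vsW\). Since \(\Cl\) is a functor, applying it yields mutually inverse algebra homomorphisms \(\Cl(\omega_\vsW)\colon\Cl(\vsV/\F\ezero)\to\Cl(\vsW)\) and \(\Cl(\pi|_\vsW)\colon\Cl(\vsW)\to\Cl(\vsV/\F\ezero)\) --- indeed \(\Cl(\omega_\vsW)\circ\Cl(\pi|_\vsW)=\Cl(\omega_\vsW\circ\pi|_\vsW)=\Cl(\id)=\id\), and symmetrically --- so these are canonical algebra isomorphisms. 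Composing with the isomorphism of (1) and (2) (equivalently, noting that \(\Cl(\pi_\vsW)=\Cl(\omega_\vsW)\circ\Cl(\pi)\) is an idempotent algebra endomorphism of \(\Cl(\vsV)\) with image \(\Cl(\vsW)\) and kernel \(\Cl(\vsV)\ezero\)) identifies all three algebras canonically.

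For the ``moreover'' clause, write \(\vecU=\lambda\ezero\) with \(\lambda\in\F^\times\). By \Cref{theorem:right_multiplication_isomorphism}, \(\mvecX\mapsto\mvecX\vecU=\lambda\mvecX\ezero\) is a linear isomorphism from \(\Cl(\vsW)\) onto \(\Cl(\vsV)\ezero\), which equals \(\Cl(\vsW)\ezero\) by \Cref{theorem:complement_times_ezero_equals_ideal}. Precomposing with the canonical isomorphisms above carries this to a linear isomorphism from each of (1), (2), and (3) onto \(\Cl(\vsV)\ezero\); on the quotient algebra \(\Cl(\vsV)/\Cl(\vsV)\ezero\) this reads \(\mvecX+\Cl(\vsV)\ezero\mapsto\mvecX\vecU\), which is well-defined because \((\mvecA\ezero)\vecU=\lambda\mvecA\ezero^2=0\) for all \(\mvecA\in\Cl(\vsV)\). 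For distinctness, evaluate at the multiplicative identity: if \(\vecU\neq\vecU'\) in \(\F\ezero\setminus\set0\) then \(1\cdot\vecU=\vecU\neq\vecU'=1\cdot\vecU'\), so the corresponding isomorphisms differ.

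I do not expect a genuinely hard step here: the content of the theorem is the packaging of \Cref{theorem:quotient_algebra_isomorphic_algebra_of_quotient,theorem:complement_times_ezero_equals_ideal,theorem:right_multiplication_isomorphism} together with functoriality of \(\Cl\). The only points demanding care are verifying that \(\Cl\) carries a bijective isometry to an algebra isomorphism (immediate from the functor axioms) and checking the well-definedness of the transported right-multiplication maps on the quotient algebra, both routine.
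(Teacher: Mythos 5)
Your proposal is correct and follows essentially the same route as the paper: \Cref{theorem:quotient_algebra_isomorphic_algebra_of_quotient} for (1)\(\cong\)(2), the Clifford extension of the canonical section \(\omega_\vsW\) for (2)\(\cong\)(3), and \Cref{theorem:right_multiplication_isomorphism} together with \Cref{theorem:complement_times_ezero_equals_ideal} for the ``moreover'' clause. Your extra checks (mutual inverses via functoriality, well-definedness on the quotient, distinctness at the identity) are routine elaborations of what the paper leaves implicit.
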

\begin{proof}
    (1) and (2) are isomorphic by \Cref{theorem:quotient_algebra_isomorphic_algebra_of_quotient}. (2) and (3) are isomorphic via the Clifford extension of the canonical section \(\omega_\vsW\). \Cref{theorem:right_multiplication_isomorphism} establishes the family of linear isomorphisms to \(\Cl(\vsV)\ezero\), which by \Cref{theorem:complement_times_ezero_equals_ideal} is equal to \(\Cl(\vsW)\ezero\).
\end{proof}

\begin{rexample}
    In the context of Euclidean PGA, the \added{quotient} Clifford algebra \(\Cl(\vsV/\R\ezero)\) (likewise \(\Cl(\vsV)/\Cl(\vsV)\ezero\)) describes the objects in and transformations of the space of parallel classes of planes in Euclidean space. Along with the parallel classes of planes themselves, we see two-parameter bundles of parallel lines, and one ``parallel class'' comprising all points in space. We also see reflections and rotations of space; note that the action of, say, a rotation on a parallel class of planes does not depend on the axis of rotation, only on the angle and the parallel class to which the axis belongs, so a ``parallel class'' of rotations carries only this information. As we see from \Cref{theorem:fundamental_theorem_of_aga}, these objects and transformations correspond (canonically!) one-to-one with the objects through and transformations fixing a given point \(\ptP\): the elements of \(\Cl(\vsV_\ptP)\). The second statement in \Cref{theorem:fundamental_theorem_of_aga} says that each of these parallel classes also corresponds one-to-one with an element at infinity in the ideal \(\Cl(\vsV)\ezero\); for instance, a parallel bundle of lines meets infinity at a single ideal point. The correspondence is canonical up to a choice of scalar multiple of \(\ezero\).
\end{rexample}

A \definedTerm{square-zero ideal} is an ideal \(I\) such that \(I^2=\set{ij\mid i,j\in I}=\set0\). By \Cref{theorem:commuting_ezero}, the ideal \(\Cl(\vsV)\ezero\) is a square-zero ideal. As \(\Cl(\vsV)\ezero\) is the kernel of \(\Cl(\pi)\), this makes \(\Cl(\vsV)\) a \definedTerm{square-zero extension} of \(\Cl(\vsW)\). The most well-known square-zero extension is the \definedTerm{trivial extension} of a ring \(R\) by an \(R\)-bimodule \(M\), which is the ring \(R\ltimes M\) given by the direct sum \(R\oplus M\) and the product \((r_1,m_1)(r_2,m_2)\coloneqq(r_1r_2,r_1m_2+m_1r_2)\). This is not quite what we want, as we will see; we turn instead to the \definedTerm{twisted trivial extension} as defined by \citet{guo1}, in which the module \(M\) is replaced by its twist \(M^\sigma\) for a ring automorphism \(\sigma\) of \(R\). The module \(M^\sigma\) is the same left \(R\)-module as \(M\), but its right multiplication is given by \(m\cdot r\coloneqq m\sigma(r)\). The twisted trivial extension \(R\ltimes_\sigma M\coloneqq R\ltimes M^\sigma\) thus has product \((r_1,m_1)(r_2,m_2)=(r_1r_2,r_1m_2+m_1\sigma(r_2))\).

\begin{theorem}\label{theorem:clifford_algebra_decomposition}The degenerate Clifford algebra \(\Cl(\vsV)\) is isomorphic to the twisted trivial extension \(\Cl(\vsW)\ltimes_\gradeInv\Cl(\vsW)\), and hence also to \(\Cl(\vsV/\F\ezero)\ltimes_\gradeInv\Cl(\vsV/\F\ezero)\).\end{theorem}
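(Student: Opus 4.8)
The plan is to write down the isomorphism explicitly and verify that it respects multiplication. Throughout I identify $\Cl(\vsW)$ with its image under the Clifford extension $\Cl(\iota_\vsW)$ of the inclusion $\iota_\vsW\colon\vsW\hookrightarrow\vsV$; this is legitimate because $\pi_\vsW\circ\iota_\vsW=\id_\vsW$ (recall $\pi_\vsW=\omega_\vsW\circ\pi$ and $\omega_\vsW=(\pi\circ\iota_\vsW)^{-1}$ from \Cref{theorem:orthogonal_complements}), so $\Cl(\pi_\vsW)$ restricts to the identity on the subalgebra generated by $\vsW$, and in particular $\Cl(\iota_\vsW)$ is injective. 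I also note that $\gradeInv=\Cl(-\id_\vsV)$ preserves $\vsW$ and hence restricts to the grade involution of $\Cl(\vsW)$; this is the automorphism denoted $\gradeInv$ in the statement. With these conventions I would define
\[
    \Phi\colon\Cl(\vsW)\ltimes_\gradeInv\Cl(\vsW)\to\Cl(\vsV),\qquad(\mvecA,\mvecB)\mapsto\mvecA+\mvecB\ezero,
\]
which is manifestly linear and sends $(1,0)$ to $1$.

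Bijectivity of $\Phi$ is essentially already available: surjectivity is exactly \Cref{theorem:clifford_algebra_playfair_decomposition}, and for injectivity, if $\mvecA+\mvecB\ezero=0$ then applying $\Cl(\pi_\vsW)$ — which is the identity on $\Cl(\vsW)$ and annihilates $\Cl(\vsV)\ezero$, since $\pi_\vsW(\ezero)=\omega_\vsW(0)=0$ — gives $\mvecA=0$, whence $\mvecB\ezero=0$ forces $\mvecB=0$ by \Cref{theorem:right_multiplication_isomorphism} with $\lambda=1$. So $\Phi$ is a linear bijection.

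The crux is checking that $\Phi$ is a ring homomorphism, and this is a short computation using the tools already assembled. Expanding,
\[
    (\mvecA_1+\mvecB_1\ezero)(\mvecA_2+\mvecB_2\ezero)=\mvecA_1\mvecA_2+\mvecA_1\mvecB_2\ezero+\mvecB_1\ezero\mvecA_2+\mvecB_1\ezero\mvecB_2\ezero,
\]
one applies \Cref{theorem:commuting_ezero} to rewrite $\ezero\mvecA_2=\gradeInv(\mvecA_2)\ezero$ and $\ezero\mvecB_2=\gradeInv(\mvecB_2)\ezero$, while $\ezero^2=0$ (as $\ezero$ is degenerate) kills the last term. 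What remains is $\mvecA_1\mvecA_2+\bigl(\mvecA_1\mvecB_2+\mvecB_1\gradeInv(\mvecA_2)\bigr)\ezero$, which is precisely $\Phi$ applied to the twisted product $(\mvecA_1\mvecA_2,\ \mvecA_1\mvecB_2+\mvecB_1\gradeInv(\mvecA_2))$. Hence $\Phi$ is an algebra isomorphism, establishing the first assertion.

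For the second assertion I would transport along the canonical isomorphism of \Cref{theorem:fundamental_theorem_of_aga}: the Clifford extension $\Cl(\omega_\vsW)\colon\Cl(\vsV/\F\ezero)\to\Cl(\vsW)$ intertwines the two grade involutions, because $\omega_\vsW\circ(-\id)=(-\id)\circ\omega_\vsW$ as isometries and $\Cl$ is a functor. An algebra isomorphism carrying $\gradeInv$ to $\gradeInv$ induces an isomorphism of the associated twisted trivial extensions, so $\Cl(\vsV/\F\ezero)\ltimes_\gradeInv\Cl(\vsV/\F\ezero)\cong\Cl(\vsW)\ltimes_\gradeInv\Cl(\vsW)\cong\Cl(\vsV)$. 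I do not anticipate a genuine obstacle here; the only things to be careful about are keeping the three copies of the algebra straight (the abstract $\Cl(\vsW)$, its image inside $\Cl(\vsV)$, and $\Cl(\vsV/\F\ezero)$) and recording the two small facts $\ezero^2=0$ and the compatibility of $\gradeInv$ with the inclusion $\vsW\hookrightarrow\vsV$.
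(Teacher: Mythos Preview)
Your proof is correct and essentially identical to the paper's: both construct the same explicit map between $\Cl(\vsV)$ and $\Cl(\vsW)\ltimes_\gradeInv\Cl(\vsW)$ (you write it as $\Phi$ in one direction, the paper as $\phi$ in the other), verify multiplicativity by the same expansion using \Cref{theorem:commuting_ezero} and $\ezero^2=0$, and handle the quotient version via $\Cl(\omega_\vsW)$. The only cosmetic difference is that the paper packages bijectivity into a single appeal to \Cref{theorem:clifford_algebra_playfair_decomposition} and \Cref{theorem:fundamental_theorem_of_aga}, whereas you separate surjectivity and injectivity explicitly.
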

\begin{proof}
    By \Cref{theorem:clifford_algebra_playfair_decomposition}, we have that \(\Cl(\vsV)=\Cl(\vsW)\oplus\Cl(\vsW)\ezero\); by \Cref{theorem:fundamental_theorem_of_aga} this is linearly isomorphic to \(\Cl(\vsW)\oplus\Cl(\vsW)\) via \(\phi:\mvecX+\mvecY\ezero\mapsto(\mvecX,\mvecY)\). Also, \[\begin{split}
        \phi((\mvecX_1+\mvecY_1\ezero)(\mvecX_2+\mvecY_2\ezero))&=\phi(\mvecX_1\mvecX_2+\mvecX_1\mvecY_2\ezero+\mvecY_1\ezero\mvecX_2+\mvecY_1\ezero\mvecY_2\ezero)\\
        &=\phi(\mvecX_1\mvecX_2+(\mvecX_1\mvecY_2+\mvecY_1\gradeInv(\mvecX_2))\ezero)\\
        &=(\mvecX_1\mvecX_2,\mvecX_1\mvecY_2+\mvecY_1\gradeInv(\mvecX_2))\\
        &=(\mvecX_1,\mvecY_1)(\mvecX_2,\mvecY_2)\\
        &=\phi(\mvecX_1+\mvecY_1\ezero)\phi(\mvecX_2+\mvecY_2\ezero),
    \end{split}\] so \(\phi\) is an algebra isomorphism from \(\Cl(\vsV)\) to \(\Cl(\vsW)\ltimes_\gradeInv\Cl(\vsW)\). As earlier, the Clifford extension of the canonical section \(\omega_\vsW\) gives the isomorphism to \(\Cl(\vsV/\F\ezero)\ltimes_\gradeInv\Cl(\vsV/\F\ezero)\).
\end{proof}

\added{We may also think about decomposing the even subalgebra \(\Cl^+(\vsV)\) and the odd subspace \(\Cl^-(\vsV)\) of \(\Cl(\vsV)\). It is fairly clear from \Cref{theorem:clifford_algebra_decomposition} that \(\Cl^+(\vsV)=\Cl^+(\vsW)\ltimes\Cl^-(\vsW)\ezero\cong\Cl^+(\vsW)\ltimes_\gradeInv\Cl^-(\vsW)\). But as \(\gradeInv\) is the identity map when restricted to \(\Cl^+(\vsW)\), this is just \[
    \Cl^+(\vsV)\cong\Cl^+(\vsW)\ltimes\Cl^-(\vsW)\cong\Cl^+(\vsV/\F\ezero)\ltimes\Cl^-(\vsV/\F\ezero),
\] the ``untwisted'' trivial extension of the even subalgebra by the odd subspace. Similarly we get that the odd subspace \(\Cl^-(\vsV)=\Cl^-(\vsW)\oplus\Cl^+(\vsW)\ezero\), or \[
    \Cl^-(\vsV)\cong\Cl^+(\vsW)\oplus\Cl^-(\vsW)\cong\Cl^+(\vsV/\F\ezero)\oplus\Cl^-(\vsV/\F\ezero),
\] swapping the order of direct summands for the sake of aesthetics.}

\section{The group of units and Lie algebra of bivectors}\label{section:group_of_units}
We denote the \definedTerm{group of units} of a unital algebra \(\algA\) by \(\algA^\times\), and for a homomorphism \(\phi:\algA\to\algB\), denote the \definedTerm{restriction to units} \(\phi^\times\coloneqq\phi|_{\algA^\times}:\algA^\times\to\algB^\times\). This defines the functor \(-^\times:\Alg_\F\to\Grp\) (see \citet[p. 50, Ex. 8]{cohn1}). The proof of the following lemma is adapted from a similar one by \citet[Theorem 3.7]{anderson_winders1} about the units of a trivial extension of a commutative ring.

\begin{lemma}\label{theorem:units_of_twisted_trivial_extension}Let \(R\) be a ring with an automorphism \(\sigma\), and \(M\) an \(R\)-bimodule. The group of units \((R\ltimes_\sigma M)^\times\) of the twisted trivial extension \(R\ltimes_\sigma M\) is \(R_0^\times\ltimes(1+M_0)\cong R^\times\ltimes_\tau M\), where \(R_0=\set{(r,0)\mid r\in R}\), \(1+M_0=\set{(1,m)\mid m\in M}\), and \(\tau:R^\times\to\Aut_R(M)\) is given by \(\tau(r):m\mapsto rm\sigma(r^{-1})\).\end{lemma}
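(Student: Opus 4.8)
The plan is to first determine exactly which elements of $R\ltimes_\sigma M$ are invertible, and then recognise the group of units as an internal semidirect product. For the first part, note that the first-coordinate projection $R\ltimes_\sigma M\to R$, $(r,m)\mapsto r$, is a unital ring homomorphism; hence if $(r,m)$ is a unit then $r\in R^\times$. Conversely, given $r\in R^\times$ and any $m\in M$, I would verify by direct computation that $(r^{-1},-r^{-1}m\sigma(r^{-1}))$ is a two-sided inverse of $(r,m)$: both products collapse to $(1,0)$ once one uses $\sigma(r)\sigma(r^{-1})=\sigma(1)=1$ to cancel the twisted terms. Thus $(R\ltimes_\sigma M)^\times=\set{(r,m)\mid r\in R^\times,\ m\in M}$, which already identifies the underlying set of the group of units.

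For the second part, observe that the restriction to units of the first-coordinate projection is a surjective group homomorphism $p\colon(R\ltimes_\sigma M)^\times\to R^\times$ whose kernel is $1+M_0$; since $(1,m)(1,n)=(1,m+n)$ this kernel is isomorphic as a group to the additive group of $M$, and in particular it is normal. The map $r\mapsto(r,0)$ is a group homomorphism $R^\times\to(R\ltimes_\sigma M)^\times$ splitting $p$, with image $R_0^\times$ meeting $1+M_0$ trivially, so $(R\ltimes_\sigma M)^\times=R_0^\times\ltimes(1+M_0)$. To obtain the displayed abstract form, transport everything along the isomorphisms $R_0^\times\cong R^\times$ and $1+M_0\cong(M,+)$ and read off the conjugation action: computing $(r,0)(1,m)(r,0)^{-1}=(r,rm)(r^{-1},0)=(1,rm\sigma(r^{-1}))$ shows the action of $r$ on $M$ is $m\mapsto rm\sigma(r^{-1})=\tau(r)(m)$. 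A short check shows $\tau(r)$ is invertible with inverse $\tau(r^{-1})$, and that $\tau$ is a homomorphism into $\Aut_R(M)$ (using that $\sigma$ is multiplicative), whence $(R\ltimes_\sigma M)^\times\cong R^\times\ltimes_\tau M$.

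None of these steps is deep — the whole argument is the Anderson--Winders computation for trivial extensions of commutative rings, with the automorphism $\sigma$ threaded through — so the main thing to be careful about is purely clerical: keeping $\sigma$ attached to the correct (right-hand) factor in every product, inverse, and conjugation, and confirming that the twist does not obstruct $\tau(r)$ from being a well-defined automorphism. I expect the explicit two-sided inverse in the first step to be the place where the twist is easiest to misplace, so I would write that verification out in full rather than leaving it to the reader.
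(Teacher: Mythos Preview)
Your proposal is correct and follows essentially the same route as the paper: both arguments identify \(R_0^\times\) and \(1+M_0\) as subgroups intersecting trivially, show every unit factors through them, and read off the conjugation action to obtain \(\tau\). Your version is slightly more systematic---using the first-coordinate projection as a split surjection and exhibiting an explicit two-sided inverse---whereas the paper argues the factorisation more directly; but this is a presentational difference rather than a different idea, and both are the Anderson--Winders computation with \(\sigma\) threaded through.
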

\begin{proof}
    It is straightforward to show that \({R_0}^\times\) is a subgroup of \((R\ltimes_\sigma M)^\times\) isomorphic to \(R^\times\), that \(1+M_0\) is an abelian subgroup of \((R\ltimes_\sigma M)^\times\) isomorphic to \(M\), and that the two subgroups intersect trivially. Now suppose that \((r,m)\in(R\ltimes_\sigma M)^\times\); then there exists \((s,n)\in R\ltimes_\sigma M\) such that \((r,m)(s,n)=(rs,rn+m\sigma(s))=(1,0)\). Hence \(rs=1\), so \((r,0)(s,0)=(1,0)\) and hence \((r,0)\) is a unit. Thus \((r,0)(1,sm)=(r,m)\) and so \((R\ltimes_\sigma M)^\times={R_0}^\times\ltimes(1+M_0)\), where the action of \({R_0}^\times\) on \(1+M_0\) is given by \[
        (1,m)\mapsto(r^{-1},0)(1,m)(r,0)=(1,r^{-1}m\sigma(r))=(1,\tau(r^{-1})(m)).
    \] Therefore \((R\ltimes_\sigma M)^\times\cong R^\times\ltimes_\tau M\).
\end{proof}

Recall that we assume a degenerate quadratic space \(\vsV\) with \(\ezero\in\rad\vsV\) and \(\vsW\) some complement of \(\F\ezero\) in \(\vsV\).

\begin{corollary}\label{theorem:group_of_units_decomposition} The group of units of \(\Cl(\vsV)\) is \(\Cl(\vsW)^\times\ltimes(1+\Cl(\vsW)e_0)\), which is isomorphic to \(\Cl(\vsV/\F\ezero)^\times\ltimes_\tau\Cl(\vsV/\F\ezero)\) and \(\Cl(\vsW)^\times\ltimes_\tau\Cl(\vsW)\).\end{corollary}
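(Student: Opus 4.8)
The plan is to specialize Lemma~\ref{theorem:units_of_twisted_trivial_extension} to the decomposition furnished by Theorem~\ref{theorem:clifford_algebra_decomposition}, and then transport the result along that isomorphism using the functor $-^\times$.

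First I would observe that $\gradeInv$ restricts to a ring automorphism of $\Cl(\vsW)$: it is the Clifford extension of the orthogonal transformation $-\id$ of $\vsW$, hence an (involutive) algebra automorphism. So Lemma~\ref{theorem:units_of_twisted_trivial_extension} applies verbatim with $R=M=\Cl(\vsW)$ and $\sigma=\gradeInv$, giving
\[
\bigl(\Cl(\vsW)\ltimes_\gradeInv\Cl(\vsW)\bigr)^\times={R_0}^\times\ltimes(1+M_0)\cong\Cl(\vsW)^\times\ltimes_\tau\Cl(\vsW),
\]
where $\tau(\mvecX)\colon\mvecY\mapsto\mvecX\mvecY\gradeInv(\mvecX^{-1})$.

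Next I would pull this back along the isomorphism $\phi\colon\mvecX+\mvecY\ezero\mapsto(\mvecX,\mvecY)$ of Theorem~\ref{theorem:clifford_algebra_decomposition}. By functoriality of $-^\times$, its restriction $\phi^\times$ is a group isomorphism $\Cl(\vsV)^\times\to(\Cl(\vsW)\ltimes_\gradeInv\Cl(\vsW))^\times$ carrying the copy of $\Cl(\vsW)^\times$ sitting inside $\Cl(\vsV)^\times$ (via $\Cl(\vsW)\leq\Cl(\vsV)$) onto ${R_0}^\times$, and the subgroup $1+\Cl(\vsW)\ezero$ onto $1+M_0$. Hence $\Cl(\vsV)^\times=\Cl(\vsW)^\times\ltimes(1+\Cl(\vsW)\ezero)\cong\Cl(\vsW)^\times\ltimes_\tau\Cl(\vsW)$. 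The version over $\Cl(\vsV/\F\ezero)$ then follows either by repeating the argument with the second isomorphism of Theorem~\ref{theorem:clifford_algebra_decomposition}, or by applying $-^\times$ to the Clifford extension of the canonical section $\omega_\vsW$, which identifies $\Cl(\vsW)$ with $\Cl(\vsV/\F\ezero)$ and commutes with the grade involution, hence intertwines the two $\tau$-actions.

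I do not anticipate a genuine obstacle: the statement is essentially an amalgamation of two results already in hand. The only points needing a line of care are that $\gradeInv$ really is an automorphism of $\Cl(\vsW)$, so that Lemma~\ref{theorem:units_of_twisted_trivial_extension} is applicable, and that under $\phi^\times$ the internal subgroups $\Cl(\vsW)^\times$ and $1+\Cl(\vsW)\ezero$ of $\Cl(\vsV)^\times$ correspond to ${R_0}^\times$ and $1+M_0$ respectively; both are immediate once $\phi$ is written out.
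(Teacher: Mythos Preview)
Your proposal is correct and is exactly the argument the paper intends: the corollary is stated without proof because it is the immediate specialization of Lemma~\ref{theorem:units_of_twisted_trivial_extension} to $R=M=\Cl(\vsW)$, $\sigma=\gradeInv$, transported along the isomorphism $\phi$ of Theorem~\ref{theorem:clifford_algebra_decomposition}. Your added remarks that $\gradeInv$ restricts to an automorphism of $\Cl(\vsW)$ and that $\phi^\times$ identifies the internal subgroups with ${R_0}^\times$ and $1+M_0$ are precisely the routine checks the paper leaves to the reader.
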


This establishes the decomposition of the group of units of a degenerate Clifford algebra. Of course, the interesting groups in a Clifford algebra are proper subgroups of the group of units; things get much more complicated here. The authors refer the reader to \citet{ablamowicz2} and \citet{filimoshina_shirokov2} for more detailed treatments of these subgroups. At every level, we see extensions of the form \(G\ltimes(1+\vsM\ezero)\), where \(G\) is some subgroup of \(\Cl(\vsW)^\times\) and \(\vsM\) is some subspace of \(\Cl(\vsW)\).

Also interesting is the Lie algebra of grade-2 elements or \definedTerm{bivectors} (see \citet[Ch. 8]{dorst_fontijne_mann1} for a gentle introduction). For any bivector \(\bvecB\in\Cl^2(\vsV)\) and any \(\mvecX\in\Cl^k(\vsV)\), the \definedTerm{commutator} \[
    \bvecB\times\mvecX\coloneqq\frac12(\bvecB\mvecX-\bvecB\mvecX)
\] is in \(\Cl^k(\vsV)\). In particular, \(\Cl^2(\vsV)\) is closed, and hence a Lie algebra, under this commutator. It is straightforward to show, by writing each element \(\bvecB\in\Cl^2(\vsV)\) in the form \[
    \bvecB=\vecU_1\vecV_1+\vecU_2\vecV_2+\dotsb+\vecU_k\vecV_k,
\] where \(\sbfB(\vecU_i,\vecV_i)=0\) for each \(1\leq i\leq k\), that a Clifford algebra homomorphism \(\phi:\Cl(\vsV)\to\Cl(\vsV')\) restricts to a Lie algebra homomorphism \(\phi^2:\Cl^2(\vsV)\to\Cl^2(\vsV')\), and so \(-^2\) is a functor from \(\Cl(\Quad_\F)\) to \(\LieAlg_\F\), the category of Lie algebras and Lie algebra homomorphisms. \Citet[Lemma 5.7, p. 182]{gracia-bondia_varilly_figueroa1} show that \(\Cl^2(\vsV)\cong\so(\vsV)\) in the case when \(\vsV\) is real and nondegenerate. What happens in the degenerate case?

\begin{lemma}Let \(\bvecB\in\Cl^2(\vsV)\). Then \(\derD_\vsW(\bvecB)=\vecW\ezero\) for some \(\vecW\in\vsW\).\end{lemma}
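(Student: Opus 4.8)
The plan is to reduce to a single generator of \(\Cl^2(\vsV)\) and then push the derivation \(\derD_\vsW\) through it. As recalled just before the statement, every bivector can be written as \(\bvecB = \vecU_1\vecV_1 + \dots + \vecU_k\vecV_k\) with \(\sbfB(\vecU_i,\vecV_i) = 0\) for each \(i\). Since \(\derD_\vsW\) is a Clifford algebra derivation by \Cref{theorem:projection_is_derivation}, I would expand \(\derD_\vsW(\bvecB) = \sum_{i=1}^k\bigl(\vecU_i\,\derD_\vsW(\vecV_i) + \derD_\vsW(\vecU_i)\,\vecV_i\bigr)\), so it suffices to control \(\derD_\vsW\) on individual vectors of \(\vsV\).

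First I would apply \Cref{theorem:playfair_decomposition} to each \(\vecU_i\) and \(\vecV_i\): there are scalars \(\mu_i,\nu_i \in \F\) with \(\derD_\vsW(\vecU_i) = \vecU_i - \pi_\vsW(\vecU_i) = \mu_i\ezero\) and likewise \(\derD_\vsW(\vecV_i) = \nu_i\ezero\). Substituting, and using \Cref{theorem:commuting_ezero} in the guise \(\ezero\vecV_i = \gradeInv(\vecV_i)\ezero = -\vecV_i\ezero\), each summand becomes \(\vecU_i(\nu_i\ezero) + (\mu_i\ezero)\vecV_i = (\nu_i\vecU_i - \mu_i\vecV_i)\ezero\), so that \(\derD_\vsW(\bvecB) = \bigl(\sum_{i=1}^k(\nu_i\vecU_i - \mu_i\vecV_i)\bigr)\ezero\).

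It then remains to rewrite the vector \(\vecA \coloneqq \sum_{i=1}^k(\nu_i\vecU_i - \mu_i\vecV_i) \in \vsV\) modulo \(\F\ezero\). Applying \Cref{theorem:playfair_decomposition} one last time gives \(\vecA = \vecW + \lambda\ezero\) with \(\vecW \in \vsW\) and \(\lambda \in \F\); since \(\ezero^2 = \sbfB(\ezero,\ezero) = 0\) (as \(\ezero \in \rad\vsV\)), we conclude \(\derD_\vsW(\bvecB) = \vecA\ezero = \vecW\ezero\), which is the claimed form. I do not expect a real obstacle here; the one point deserving care is that \(\derD_\vsW\) sends a single vector into \(\F\ezero\) rather than merely into the ideal \(\Cl(\vsV)\ezero\), but that is precisely the vector-level statement of \Cref{theorem:playfair_decomposition}. (Alternatively one could argue that \(\derD_\vsW = \id - \Cl(\pi_\vsW)\) is grade-preserving and that \(\Cl(\vsW)\ezero \cap \Cl^2(\vsV) = \vsW\ezero\), but the direct computation above is shorter and self-contained.)
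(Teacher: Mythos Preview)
Your argument is correct and follows essentially the same route as the paper: reduce to simple bivectors \(\vecU_i\vecV_i\), apply the Playfair decomposition to the vector factors, and read off an element of \(\vsW\ezero\). The only cosmetic difference is that the paper expands \((\vecW_i+\lambda_i\ezero)(\vecW_j+\lambda_j\ezero)\) directly with the \(\vsW\)-components, whereas you invoke the Leibniz rule for \(\derD_\vsW\) and keep the original \(\vecU_i,\vecV_i\in\vsV\); this forces your extra final step. In fact that step is unnecessary: since \(\vecU_i=\pi_\vsW(\vecU_i)+\mu_i\ezero\) and \(\vecV_i=\pi_\vsW(\vecV_i)+\nu_i\ezero\), one has \(\nu_i\vecU_i-\mu_i\vecV_i=\nu_i\pi_\vsW(\vecU_i)-\mu_i\pi_\vsW(\vecV_i)\in\vsW\) already, so your \(\vecA\) lies in \(\vsW\) without a further appeal to \Cref{theorem:playfair_decomposition}.
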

\begin{proof}
    Let \(\vecV_1,\vecV_2\in\vsV\) and \(\sbfB(\vecV_1,\vecV_2)=0\). Then by \Cref{theorem:playfair_decomposition}, there exist \(\vecW_1,\vecW_2\in\vsW\) and \(\lambda_1,\lambda_2\in\F\) such that \(\vecV_1=\vecW_1+\lambda_1\ezero\) and \(\vecV_2=\vecW_2+\lambda_2\ezero\). Hence for the \definedTerm{simple} bivector \(\vecV_1\vecV_2\), we have \begin{align*}
        \derD_\vsW(\vecV_1\vecV_2)&=\derD_\vsW((\vecW_1+\lambda_1\ezero)(\vecW_2+\lambda_2\ezero))\\
        &=\derD_\vsW(\vecW_1\vecW_2+(\lambda_2\vecW_1\replaced{-}{+}\lambda_1\vecW_2)\ezero)\\
        &=(\lambda_2\vecW_1\replaced{-}{+}\lambda_1\vecW_2)\ezero\in\vsW\ezero.
    \end{align*} Now as \(\bvecB\) may be written as a finite sum of simple bivectors, we see that \(\derD_\vsW(\bvecB)\) is a finite sum of elements of \(\vsW\ezero\) and hence is in \(\vsW\ezero\); that is, there exists \(\vecW\in\vsW\) such that \(\derD_\vsW(\bvecB)=\vecW\ezero\). 
\end{proof}

We see that the idempotent and surjective Clifford algebra homomorphism \(\Cl(\pi_\vsW)\) (resp. derivation \(\derD_\vsW\)) restricts to an idempotent and surjective Lie algebra homomorphism (resp. derivation), so the bivector Lie algebra \(\Cl^2(\vsV)\) splits into the two subspaces \(\Cl(\pi_\vsW)(\Cl^2(\vsV))=\Cl^2(\vsW)\) and \(\derD_\vsW(\Cl^2(\vsV))=\ker\Cl(\pi_\vsW)=\vsW\ezero\).

\begin{theorem}\label{theorem:lie_algebra_decomposition}The Lie algebra of bivectors \(\Cl^2(\vsV)\) is equal to the semidirect sum \(\Cl^2(\vsW)\ltimes\vsW\ezero\), which is isomorphic to \(\Cl^2(\vsW)\ltimes_\tau\vsW\) where \(\tau:\Cl^2(\vsW)\to\Der(\vsW)\) is given by \(\tau(\bvecB):\vecW\mapsto\bvecB\times\vecW\), and to \(\Cl(\vsV/\F\ezero)\ltimes_{\tau'}\vsV/\F\ezero\) where \(\tau'\) is defined similarly.\end{theorem}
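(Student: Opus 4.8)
The plan is to upgrade the vector-space splitting $\Cl^2(\vsV)=\Cl^2(\vsW)\oplus\vsW\ezero$ recorded just before the statement to a statement about Lie brackets. First I would note that this splitting is already an \emph{internal} semidirect sum: $\Cl^2(\vsW)=\Cl(\pi_\vsW)(\Cl^2(\vsV))$ is a Lie subalgebra, being the image of the idempotent Lie algebra homomorphism obtained by restricting $\Cl(\pi_\vsW)$ to $\Cl^2(\vsV)$, while $\vsW\ezero$ is the kernel of that same homomorphism and hence an ideal. So $\Cl^2(\vsV)=\Cl^2(\vsW)\ltimes\vsW\ezero$ with no extra work.

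Next I would identify the two pieces of structure carried by this internal semidirect sum. For the bracket on the ideal: $\vsW\ezero\subseteq\Cl(\vsV)\ezero$, which is a square-zero ideal by \Cref{theorem:commuting_ezero}, so the Clifford product of any two elements of $\vsW\ezero$ vanishes and $\vsW\ezero$ is therefore abelian. For the action of the subalgebra, given $\bvecB\in\Cl^2(\vsW)$ and $\vecW\ezero\in\vsW\ezero$ I would compute
\[
    \bvecB\times(\vecW\ezero)=\tfrac12\paren*{\bvecB\vecW\ezero-\vecW\ezero\bvecB}=\tfrac12\paren*{\bvecB\vecW-\vecW\bvecB}\ezero=(\bvecB\times\vecW)\ezero,
\]
where the middle equality uses $\ezero\bvecB=\gradeInv(\bvecB)\ezero=\bvecB\ezero$ (\Cref{theorem:commuting_ezero}, $\bvecB$ being even-graded) and where $\bvecB\times\vecW\in\Cl^1(\vsW)=\vsW$ because commuting with a bivector preserves grade. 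Transporting along the linear isomorphism $\vsW\ezero\to\vsW$, $\vecW\ezero\mapsto\vecW$ — the grade-$1$ restriction of the isomorphism of \Cref{theorem:right_multiplication_isomorphism} (or of \Cref{theorem:fundamental_theorem_of_aga}) — turns the internal semidirect sum into the external product $\Cl^2(\vsW)\ltimes_\tau\vsW$, with $\vsW$ abelian and $\tau(\bvecB)\colon\vecW\mapsto\bvecB\times\vecW$. Here $\tau(\bvecB)$ is automatically a derivation of the abelian Lie algebra $\vsW$, i.e.\ an arbitrary element of $\Der(\vsW)=\gl(\vsW)$, and $\tau$ is a Lie algebra homomorphism because it is the restriction to $\Cl^2(\vsW)$ of the adjoint action $\bvecB\mapsto\bvecB\times(-)$ of $\Cl^2(\vsV)$, which preserves $\Cl^1(\vsV)$ and so restricts to a map into $\gl(\vsW)$, and is a homomorphism by the Jacobi identity of the Clifford commutator. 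Finally, conjugating everything by the Clifford extension $\Cl(\omega_\vsW)$ — which restricts to a Lie algebra isomorphism $\Cl^2(\vsV/\F\ezero)\to\Cl^2(\vsW)$ and to the isometry $\omega_\vsW\colon\vsV/\F\ezero\to\vsW$ in grade $1$ — carries $\tau$ to the analogous $\tau'$ and yields $\Cl^2(\vsV)\cong\Cl^2(\vsV/\F\ezero)\ltimes_{\tau'}\vsV/\F\ezero$.

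I do not expect a genuine obstacle, since all the machinery is already in place; the content is the one-line commutator computation above together with the bookkeeping needed to check the identifications are compatible. The point that deserves the most care is the assertion that $\tau$ is a Lie homomorphism rather than a bare linear map: this is exactly where one needs both that commutation with a bivector is grade-preserving (so $\tau(\bvecB)$ sends $\vsW$ into $\vsW$) and the Jacobi identity in $\Cl^2(\vsV)$ (so $\tau([\bvecB_1,\bvecB_2])=[\tau(\bvecB_1),\tau(\bvecB_2)]$). It is also worth flagging the contrast with the nondegenerate case, where the bivector Lie algebra is $\so(\vsV)$: here the ideal part $\vsW\ezero$ is abelian, so $\Cl^2(\vsV)$ is instead an abelian extension of $\Cl^2(\vsW)$.
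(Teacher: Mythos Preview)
Your proposal is correct and follows essentially the same route as the paper: identify $\vsW\ezero$ as the kernel (hence ideal) and $\Cl^2(\vsW)$ as the image (hence subalgebra) of $\Cl^2(\pi_\vsW)$, check that $\vsW\ezero$ is abelian, compute $\bvecB\times(\vecW\ezero)=(\bvecB\times\vecW)\ezero$ via $\gradeInv(\bvecB)=\bvecB$, and then transport along the isomorphisms of \Cref{theorem:fundamental_theorem_of_aga}. The only noticeable difference is that the paper verifies $\tau(\bvecB_1\times\bvecB_2)=\tau(\bvecB_1)\circ\tau(\bvecB_2)-\tau(\bvecB_2)\circ\tau(\bvecB_1)$ by an explicit four-term expansion, whereas you invoke the Jacobi identity directly; these are the same argument at different levels of unpacking.
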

\begin{proof}
    First note that \(\vsW\ezero\) is the kernel of \(\Cl^2(\pi_\vsW)\) and so an ideal of \(\Cl^2(\vsV)\), and that \(\Cl^2(\vsW)\) is its image and hence a subalgebra. Thus we have a split extension of Lie algebras and so \(\Cl^2(\vsV)=\Cl^2(\vsW)\ltimes\vsW\ezero\), where the action of \(\Cl^2(\vsW)\) on \(\vsW\ezero\) is given by \begin{align*}
        \vecW\ezero&\mapsto\bvecB\times\vecW\ezero=\frac12(\bvecB\vecW\ezero-\vecW\ezero\bvecB)=(\bvecB\times\vecW)\ezero=\tau(\bvecB)(\vecW)\ezero
    \end{align*} because \(\gradeInv(\bvecB)=\bvecB\). Now \(\vsW\ezero\) is an abelian ideal as, for any \(\vecW_1\ezero,\vecW_2\ezero\in\vsW\ezero\), we have that \(\vecW_1\ezero\times\vecW_2\ezero=0\); hence \(\vecW\ezero\mapsto\vecW\) is an isomorphism from \(\vsW\ezero\) to the abelian Lie algebra \(\vsW\). Also because \(\vsW\) is an abelian Lie algebra, the linear map \(\tau(\bvecB)\) is trivially a derivation on \(\vsW\) for all \(\bvecB\in\Cl^2(\vsW)\). Let \(\bvecB_1,\bvecB_2\in\Cl^2(\vsW)\); then for all \(\vecW\in\vsW\), \begin{align*}
        \tau(\bvecB_1\times\bvecB_2)(\vecW)={}&(\bvecB_1\times\bvecB_2)\times\vecW\\
        ={}&\frac14((\bvecB_1\bvecB_2-\bvecB_2\bvecB_1)\vecW-\vecW(\bvecB_1\bvecB_2-\bvecB_2\bvecB_1))\\
        ={}&\frac14(\bvecB_1(\bvecB_2\vecW-\vecW\bvecB_2)-(\bvecB_2\vecW-\vecW\bvecB_2)\bvecB_1\\
        &-\bvecB_2(\bvecB_1\vecW-\vecW\bvecB_1)+(\bvecB_1\vecW-\vecW\bvecB_1)\bvecB_2)\\
        ={}&\bvecB_1\times(\bvecB_2\times\vecW)-\bvecB_2\times(\bvecB_1\times\vecW)\\
        ={}&(\tau(\bvecB_1)\circ\tau(\bvecB_2)-\tau(\bvecB_2)\circ\tau(\bvecB_1))(\vecW),
    \end{align*} so \(\tau\) is indeed a Lie algebra homomorphism. \Cref{theorem:fundamental_theorem_of_aga} establishes the required isomorphisms for the remainder of the theorem.
\end{proof}

\begin{rexample}The significance of \Cref{theorem:lie_algebra_decomposition} is quite clear in our running example, where we have the known isomorphisms \(\Cl^2(\vsV/\R\ezero)\cong\so(3)\) and \(\vsV/\R\ezero\cong\R^3\). Hence we get \begin{align*}
    \Cl^2(\vsV)\cong\so(3)\ltimes\R^3=\se(3),
\end{align*} the Lie algebra of infinitesimal motions of Euclidean space.\end{rexample}

\section{Conclusion}
Classical treatments of degenerate quadratic spaces and their Clifford algebras have focused on splitting into a nondegenerate quotient space and the radical. In this article, inspired by the unique structure of the affine geometry found in Euclidean PGA, we describe a new ``one dimension at a time" approach, yielding trivial decompositions of the corresponding Clifford algebra (\Cref{theorem:clifford_algebra_decomposition}), its group of units (\Cref{theorem:group_of_units_decomposition}), and its Lie algebra of bivectors (\Cref{theorem:lie_algebra_decomposition}). We also find, in \Cref{theorem:complements_and_derivations}, a surprising correspondence between a class of derivations of the Clifford algebra and points in the affine space described.

We also show that, in the case of the degenerate quadratic space used by Euclidean PGA, the quotient space has a natural interpretation as the space of parallel classes of planes. This interpretation, combined with the results above, helps provide mathematical background for the Euclidean split described by \citet{dorst_de_keninck2}, and may lead to new methods. The isomorphisms of \Cref{theorem:fundamental_theorem_of_aga} appear particularly likely to bear fruit; consider, for example, that lines at infinity correspond one-to-one with parallel classes of planes in space, and that the norm \(\vecV^2=(\vecV+\ezero)^2\) is well-defined on parallel classes of planes. This interpretation should work for any metric affine geometry -- wherever we find unique parallels -- in any dimension and with any degenerate signature; the authors see this avenue as a kind of \definedTerm{affine geometric algebra}, standing next to PGA among modern geometric algebra's various flavours.

\section*{Statements and Declarations}
This research was conducted during the latter author's pursuit of a Bachelor of Science (Hons) degree at the \replaced{University of Western Australia}{same university}, funded partially by the Australian Government. The latter author would like to thank his fellow Honours students for many interesting and productive conversations about mathematics, and Hamish Todd for his confidence and support.

\bibliographystyle{jabbrv_abbrvnat}
\bibliography{bibliography}

\end{document}